\newcommand{\ndR}{\mathbb{R}}
\newcommand{\ndC}{\mathbb{C}}
\renewcommand{\Pr}[1]{\mathbb{P}(#1)}
\newcommand{\Prb}[1]{\mathbb{P}\left( #1 \right)}
\newcommand{\Ex}[1]{\mathbb{E}[#1]}
\newcommand{\Exb}[1]{\mathbb{E}\left[ #1 \right]}
\newcommand{\one}{\mathbbm{1}}
\newcommand{\cO}{\mathcal{O}}
\newcommand{\cL}{\mathcal{L}}
\newcommand{\cT}{\mathcal{T}}
\newcommand{\cR}{\mathcal{R}}
\newcommand{\cD}{\mathcal{D}}
\newcommand{\cW}{\mathcal{W}}
\newcommand{\mD}{\mathsf{D}}
\newcommand{\mO}{\mathsf{O}}
\newcommand{\mH}{\mathsf{H}}
\newcommand{\mT}{\mathsf{T}}
\newcommand{\me}{\mathsf{e}}
\newcommand{\CRT}{\mathcal{T}_\me}
\newcommand{\He}{\textnormal{H}}
\newcommand{\Di}{\textnormal{D}}
\newcommand{\he}{\text{h}}
\newcommand{\eqdist}{\,{\buildrel d \over =}\,}
\newcommand{\convdis}{\,{\buildrel d \over \longrightarrow}\,}
\newcommand{\Seq}{\textsc{SEQ}}
\newtheorem{theorem}{Theorem}[section]
\newtheorem{lemma}[theorem]{Lemma}
\newtheorem{remark}[theorem]{Remark}
\numberwithin{equation}{section}
\title{\textbf{Geometry of large Boltzmann outerplanar maps}}
\date{}
\author{Sigurdur \"Orn Stef\'ansson}
\address[Sigurdur \"Orn Stef\'ansson]{University of Iceland}
\email{sigurdur@hi.is}
\thanks{The first author acknowledges partial support from the University of Iceland Research Fund and is grateful for the hospitality at the University of Zurich. The second author thanks the German Research Foundation (fellowship STU 679/1-1) for their support.}
\author{Benedikt Stufler}
\address[Benedikt Stufler]{University of Zurich}
\email{benedikt.stufler@math.uzh.ch}
\begin{document}

	\maketitle
	
	

\begin{abstract}
	We study the phase diagram of random outerplanar maps sampled according to non-negative Boltzmann weights that are assigned to each face of a map. We prove that for certain choices of weights the map looks like a rescaled version of its boundary when its number of vertices tends to infinity. The Boltzmann outerplanar maps are then shown to converge in the Gromov--Hausdorff sense towards the $\alpha$-stable looptree introduced by Curien and Kortchemski~(2014), with the parameter $\alpha$ depending on the specific weight-sequence. This allows us to describe the transition of the asymptotic geometric shape from a deterministic circle to the Brownian tree.
\end{abstract}

\section{Introduction}
A planar map may be described as a proper embedding of a connected graph into the sphere, considered up to continuous deformations. In order to eliminate possible internal symmetries one usually distinguishes and orients a root edge. The probabilistic study of these objects has lead to a rich and beautiful theory, see for example the survey paper~\cite{MR3025391} and references given therein. The connected components of the complement of a planar map are its faces, and the unique face that lies to the right of the oriented edge is termed the outer face. This face is usually drawn as the unbounded face in plane representations. The number of edges adjacent to a face is its degree. A planar map is termed outerplanar, if all its vertices are adjacent to the outer face. 

Classes of outerplanar maps have received some attention in recent literature: Bonichon, Gavoille, and Hanusse~\cite{MR2185278} gave a combinatorial encoding of outerplanar maps in terms of certain bi-coloured plane trees. Combining this encoding with probabilistic techniques, Caraceni~\cite{caraceni2016} described the asymptotic geometric shape of uniform outerplanar maps, establishing Aldous' Brownian tree as their Gromov--Hausdorff scaling limit. The scaling limit and the asymptotic enumerative formula for outerplanar maps were later recovered (and extended to related classes such as bipartite outerplanar maps) in~\cite{stufler2017} by representing outerplanar maps as tree-like orderings of dissections of polygons. Geffner and Noy~\cite{MR3650252} established bijections between outerplanar maps and certain Dyck-paths with marked steps, providing precise enumerative expressions for various classes of outerplanar maps with respect to the number of edges and vertices.

Given a sequence of non-negative weights, we may assign to any planar map its Boltzmann weight given by the product of weights corresponding to the degrees of the (inner) faces. Rather than restricting ourselves to uniformly sampled maps from some class, we may sample maps of a given size parameter (for example, the number of edges, vertices, or faces) with probability proportional to their Boltzmann weight, and observe their behaviour as this size parameter tends to infinity. The study of such Boltzmann planar maps became quite popular~\cite{MR2778796, MR3342658,MR3484733,MR3646061,2016arXiv161208618M,2017arXiv170401950R}, and led to the discovery of many interesting phenomena that differ greatly from the uniform case.

Caraceni~\cite{caraceni2016} showed that for uniform outerplanar maps the inner faces typically have small degree, and the map has a tree-like geometric shape. A similar behaviour was observed later for further natural combinatorial classes of outerplanar maps~\cite{stufler2017}. But which interesting phenomena may be observed in weighted outerplanar maps? It was recently shown in~\cite{2016arXiv161202580S} that for arbitrary weight sequences the Boltzmann outerplanar map admits a local weak limit that describes the asymptotic behaviour near the root-edge, and a Benjamini--Schramm limit that describes the vicinity of a uniformly selected vertex. It was observed that there are three characteristic regimes, numbered I, II, and III. In the first, the limits have no doubly-infinite paths, and the shapes of the two local limits bear some similarity with Kesten's tree \cite{MR871905} (see also \cite[Sec. 5]{MR2908619}). In the second regime, both limits contain a marked doubly infinite path that corresponds to a face of macroscopic degree in the random outerplanar maps. In the third regime, the local behaviour near the fixed and random root is degenerate: both limits are equal to a single deterministic doubly infinite path. The regimes for the local limits provide hints on what to expect in the global limit, but a more fine-grained distinction is necessary. 

As our main result, we prove convergence towards Curien and Kortchemski's  \cite{MR3286462} $\alpha$-stable loop-tree $(\mathscr{L}_\alpha, d_{\mathscr{L}_\alpha})$ for $1< \alpha < 2$ in a certain sub-regime of case I.  We also discuss the boundary cases where the scaling limit is given by the  Brownian tree ($\alpha = 2$) in another subcase of case I, and by a deterministic circle ($\alpha = 1$) in "well-behaved" subcases of type II and III. The rough strategy is as follows. The outerplanar map looks like a tree-like arrangement of dissections of polygons that are glued together at the cutvertices of the map.  The dissections of polygons themselves also admit a tree-like combinatorial encoding. Hence the weight sequence for the faces of the outerplanar yields branching weights for the tree that controls the locations and sizes of the dissections, and uniform branching weights for the family of trees that control the arrangement of chords within the dissections. These two sequences of branching weights are related, and we carefully describe a general family of face-weights where the first associated sequence of branching weights lies in the universality class of the $\alpha$-stable tree (known from the works \cite{2015arXiv150404358K,MR3185928,MR1964956,MR3050512}), and the second sequence lies in the condensation regime (known from  \cite{MR2764126,MR3335012, MR2908619}). So any single large weighted dissection asymptotically looks like a circle whose circumference is about a constant fraction of its number of vertices, and the tree controlling the locations of the dissections looks like an $\alpha$-stable L\'evy tree. This enables us relate the metric on the Boltzmann outerplanar map to a rescaled version of the metric on its boundary. That is, our main observation is that asymptotically the map looks like a rescaled version of the frontier of the outer face. The scaling limit then attained by verifying convergence of the boundary of the outerplanar map towards the $\alpha$-stable loop-tree, which follows from a general result of Curien and Kortchemski \cite[Thm. 4.1]{MR3286462}. Here our arguments are analogous to a recent result by Richier~\cite{2017arXiv170401950R}, who proved a scaling limit for the boundary of Boltzmann planar maps that are not restricted to be outerplanar.

\section{Notation}

\label{sec:notation}

The interplay of face-weighted dissections and outerplanar maps plays a major role in this work. We recall a variety of relevant notions associated to these objects. Throughout, we fix a sequence $\iota = (\iota_k)_{k \ge 3}$ of non-negative weights such that at least one weight is positive.

\subsection{Dissections of polygons}

We let $\cD^\gamma$ denote the class (or collection) of $\iota$-face-weighted dissections of polygons, where one edge is distinguished and oriented (such that the outer face lies to its "right"), and the origin of the root-edge does not contribute to the total number of vertices. That is, an $n$-sized $\cD^\gamma$-object is given by an oriented $(n+1)$-gon with non-intersecting chords. The smallest such object has size $1$ and is given by a single oriented edge.

The $\gamma$-weight of a dissection $D$ is given by
\[
	\gamma(D) = \prod_{F} \iota_{|F|}
\]
with $F$ ranging over all inner faces of $D$, and $|F|$ denoting the degree of the face $F$. For the dissection consisting of a single link the product is over an empty set, and hence this link receives weight $1$.  We will let $\mD_n^\gamma$ denote a random $n$-sized $\cD^\gamma$-object drawn with probability proportional to its weight.

There are various parameters associated to the class of weighted dissections. Its generating series $\cD^\gamma(z)$ counts the sums of weights of dissections with a common size. That is, its $n$-th coefficient is given by
\begin{align*}
	[z^n]\cD^\gamma(z) = \sum_{D, |D| = n} \gamma(D)
\end{align*}
with the index $D$ ranging over all $\cD^\gamma$-objects with size $n$.

Face-weighted dissections are known to admit a tree-like encoding via the Ehrenborg--M\'endez isomorphism, see for example~\cite[Ch. 6.1.3]{2016arXiv161202580S} and Section~\ref{sec:comb} below for details. In terms of generating series, this is expressed by the recursive equation
\begin{align}
	\label{eq:diss}
			\cD^\gamma(z) = z \phi_\cD( \cD^\gamma(z))
\end{align}
	with 
	\[
		\phi_\cD(z) = 1/(1 - s(z)), \qquad s(z)  = \sum_{k \ge 1} \iota_{k+2}z^k.
	\]
	Recursive equations like \eqref{eq:diss} pertain to the study of simply generated trees, and the following notation is based on Janson's comprehensive survey \cite[Sec. 3]{MR2908619} on the subject.
	
	We let $\rho_\cD$ and $\rho_{\phi_\cD}$ denote the radii of convergence of $\cD^\gamma(z)$ and $\phi_\cD(z)$, and set
	\[
		\nu_\cD = \lim_{x \nearrow \rho_{\phi_\cD}} \psi_\cD(x), \qquad \psi_\cD(x) = x \phi_\cD'(x) / \phi_\cD(x).
	\]
	We also define a parameter $\tau_\cD$ in a manner that depends on $\nu_\cD$. If $\nu_\cD \ge 1$, we let it be the unique solution of the Equation $\psi_\cD(x) = 1$, and if $\nu_\cD < 1$ we set $\tau_\cD$ equal to $\rho_{\phi_{\cD}}$. That is,
	\[
		\tau_\cD = \begin{cases}
						\text{unique solution to } \psi_\cD(\tau_\cD) = 1, &\nu_\cD \ge 1 \\
						\rho_{\phi_{\cD}}, &\nu_\cD < 1.
					\end{cases}
	\] 
	We furthermore set
	\[
		\sigma_\cD^2 = \tau_\cD \psi_\cD'(\tau_\cD).
	\]
	It holds that \[\cD^\gamma(\rho_\cD) = \tau_\cD\] and \[\rho_\cD = \tau_\cD / \phi_\cD(\tau_\cD).\]
	See for example~\cite[Sec. 7]{MR2908619} for detailed justifications  given in a more general context.
	
\subsection{Face-weighted outerplanar maps}

We let $\cO^\omega$ denote the class of $\iota$-face-weighted outerplanar maps, where one edge that is incident to the outer face is distinguished and given a direction, such that the outer face lies to its "right". 
 We will refer to the number of vertices of such an object $O$ as its size $|O|$.

We define the $\omega$-weight of an outerplanar map $O$ by the product of weights corresponding to its inner faces. That is,
\[
	\omega(O) = \prod_{F} \iota_{|F|},
\]
with $F$ ranging over all inner faces of $O$. We let $\mO_n^\omega$ denote a random $n$-sized outerplanar map drawn with probability proportional to its $\omega$-weight. 
We also define its generating series $\cO^\omega(z)$ by
\[
\cO^\omega(z) = \sum_{n \ge 1} z^n \sum_{O, |O|=n} \omega(O).
\]

Outerplanar maps admit a tree-like encoding based on the block-decomposition, see for example \cite{stufler2017}, \cite[Ch. 6.1.4]{2016arXiv161202580S}, and Section~\ref{sec:comb} below for details. This results in the recursive relation
	\[
		\cO^\omega(z) = z \phi_\cO(\cO^\omega(z))
	\]
	with 
	\[
		\phi_\cO(z) = 1/(1 - \cD^\gamma(z)).	
	\]
		We let $\rho_\cO$ and $\rho_{\phi_\cO}$ denote the radii of convergence of $\cO^\omega(z)$ and $\phi_\cO(z)$, and set
	\[
	\nu_\cO = \lim_{x \nearrow \rho_{\phi_\cO}} \psi_\cO(x), \qquad \psi_\cO(x) = x \phi_\cO'(x) / \phi_\cO(x).
	\]
	Similarly as for the dissections, we may define
	\[
	\tau_\cO = \begin{cases}
	\text{unique solution to } \psi_\cO(\tau_\cO) = 1, &\nu_\cO \ge 1 \\
	\rho_{\phi_{\cO}}, &\nu_\cO \le 1
	\end{cases}
	\]
	and
	\[
	\sigma_\cO^2 = \tau_\cO \psi_\cO'(\tau_\cO).
	\]
	As explained in detail in \cite[Sec. 7]{MR2908619} in a more general context, it holds that
	\[\cO^\omega(\rho_\cO) = \tau_\cO
	\]
	 and 
	 \[\rho_\cO = \tau_\cO / \phi_\cO(\tau_\cO).
	 \]

The parameters $\nu_\cO$ and $\nu_\cD$ are crucial in determining the behaviour of $\mD_n^\gamma$ and $\mO_n^\omega$. By \cite[Lem. 6.26]{2016arXiv161202580S} it holds that
\begin{align}
\label{eq:cases}
\nu_\cO = 	\begin{cases}
\infty, &\nu_\cD \ge 1 \\
\infty, &0 < \nu_\cD < 1, \rho_{\phi_\cD} \ge 1 \\
\frac{\tau_\cD}{(1 - \tau_\cD)(1 - \nu_\cD)}, &0 < \nu_\cD < 1, 0<\rho_{\phi_\cD}<1 \\
0, &\nu_\cD= 0,
\end{cases}
\end{align}
and those are the only possible cases. 

\subsection{Plane trees and looptrees} \label{ss:looptree}
A plane tree is a tree drawn in the plane with a marked oriented edge. We will call the origin of the root edge the root vertex. For each vertex $v$ we will call its neighbours which are further from the root vertex than $v$ its offspring and $v$ their parent. Offspring belonging to the same parent are referred to as siblings and due to the planar embedding of the tree it makes sense to speak of the order of the siblings from left to right. Another concept which will be used later is that of an ancestor. A  vertex $v$ is said to be an ancestor of a vertex $u$ if it lies on the unique path from $u$ to the root vertex.

A central concept in this paper is that of a looptree. A discrete looptree may be defined by starting from a plane tree $\mT$ and modifying the edge set as follows: Remove all edges from the tree. Each vertex which has offspring is then connected with an edge to its leftmost and its rightmost offspring and offspring of the same vertex are connected by an edge if they are adjacent as siblings, see Fig.~\ref{fi:loop}. We will denote the looptree associated to the tree $\mT$ by $\mathscr{L}(\mT)$.

In Theorem \ref{TEMAIN} we will prove weak convergence of rescaled random outerplanar maps towards the so--called $\alpha$--stable looptree which was introduced by Curien and Kortchemski. We will not give a formal definition of this random compact metric space here but refer to their paper \cite[Sec. 2]{MR3286462} for details. Informally, one may view it as the $\alpha$-stable tree \cite{MR1954248} in which every branch point of large degree is blown up into a circle. This is entirely analogous to the definition of a discrete looptree from a plane tree as given above but is technically more involved. 

\begin{figure}[t]
	\centering
	\begin{minipage}{1.0\textwidth}
		\centering
		\includegraphics[width=0.4\textwidth]{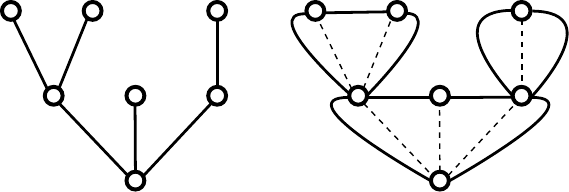}
		\caption{The looptree associated to a plane tree.}
		\label{fi:loop}
	\end{minipage}
\end{figure}

\section{The phase transition}

In order to describe the phase diagram of face-weighted outerplanar maps we are going to proceed systematically.  The key to this analysis are the parameters $\nu_\cO$ and $\nu_\cD$ introduced in Section~\ref{sec:notation}. Roughly said, $\nu_\cO$ determines how the location and sizes of the blocks (dissections) in the random outerplanar map $\mO_n^\omega$ behave asymptotically. The same goes for the parameter $\nu_\cD$ and the location and sizes of faces in the face-weighted dissection $\mD_n^\gamma$. 

We characterize three different regimes. In the \emph{circle regime} the shape of $\mO_n^\omega$ is completely determined by a giant $2$-connected block that exhibits a giant face, yielding a deterministic circle as limit after rescaling by roughly~$n$. In the \emph{Brownian tree regime} the blocks only stretch the geodesics by about a constant factor, yielding the Brownian continuum random tree as scaling limit after rescaling by about $\sqrt{n}$. Our main contribution is, however, in the \emph{$\alpha$-stable looptree regime} where the diameter of $\mO_n^\omega$ is shown to have roughly order $n^{1/\alpha}$ for $1 < \alpha < 2$. Here the location and sizes of the $2$-connected blocks (controlled by the parameter $\nu_\cO$) influence the scaling limit as well as the asymptotic shape of large Boltzmann dissections (determined by the parameter $\nu_\cD$). 

\subsection{The $\alpha$-stable loop-tree regime for $1<\alpha<2$}

In this section we focus on the case $\nu_\cO \ge 1$ and $\sigma_\cO = \infty$. This may only happen if $\nu_\cO = 1$, because $\nu_\cO>1$ implies that $\psi_\cO$ has radius of convergence larger than $\tau_\cO$, yielding $\sigma_\cO^2 = \tau_\cO \psi'(\tau_\cO) < \infty$.

We may characterize this setting. Let $r$ denote the radius of convergence of the face-weight generating series $s(z) = \sum_{k \ge 1} \iota_{k+2} z^k$. A proof of the following Lemma is given alongside all other proofs concerning the $\alpha$-stable loop-tree regime in Section~\ref{sec:palpha}.
\begin{lemma}
	\label{le:regime}
	\begin{enumerate}
		\item
	It holds that $\nu_\cO = 1$ if and only if  
		\[
		r,s(r) <1 \quad \text{and} \quad \frac{r}{1-r} + \frac{r s'(r)}{1 - s(r)} = 1.
		\]
	If this is the case, then $\tau_\cD = r$ and 
	\[
	\nu_\cD = \frac{r s'(r)}{1 - s(r)}<1.
	\]
		\item 	Suppose that $\nu_\cO = 1$. Then $\sigma_\cO = \infty$ if and only if  $s''(r) = \infty$.
	\end{enumerate}
\end{lemma}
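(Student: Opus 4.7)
The plan is to deduce part (1) from the case analysis \eqref{eq:cases} by straightforward algebraic manipulation, and to prove part (2) by propagating the behaviour of $s''$ at $r$ through the implicit equation $\cD^\gamma = z\phi_\cD(\cD^\gamma)$ into the second derivative of $\psi_\cO$.

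For part (1), observe that $\nu_\cO = 1$ is neither $0$ nor $\infty$, so among the four alternatives in \eqref{eq:cases} only the third can occur. This immediately forces $0 < \nu_\cD < 1$ and $0 < \rho_{\phi_\cD} < 1$, together with the identity
\[
\nu_\cO = \frac{\tau_\cD}{(1 - \tau_\cD)(1 - \nu_\cD)}.
\]
Since $\nu_\cD < 1$, the definition of $\tau_\cD$ gives $\tau_\cD = \rho_{\phi_\cD}$. I would next show that $\rho_{\phi_\cD} = r$: if instead $\rho_{\phi_\cD} < r$, then by continuity of $s$ on $[0,r)$ one would have $s(\rho_{\phi_\cD}) = 1$, and hence $\psi_\cD(x) = x s'(x)/(1-s(x)) \to \infty$ as $x \nearrow \rho_{\phi_\cD}$, contradicting $\nu_\cD < 1$. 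Hence $\tau_\cD = r$, so necessarily $s(r) < 1$, and $\nu_\cD = rs'(r)/(1-s(r))$. Substituting into the displayed formula, the equation $\nu_\cO = 1$ rearranges exactly to $r/(1-r) + rs'(r)/(1-s(r)) = 1$. The converse direction runs the same computation in reverse: assuming the equation together with $r, s(r) < 1$ shows that $\rho_{\phi_\cD} = r$ and that one lies in the third alternative with $\nu_\cO = 1$.

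For part (2), from $\phi_\cO(z) = 1/(1 - \cD^\gamma(z))$ one computes
\[
\psi_\cO(x) = \frac{x (\cD^\gamma)'(x)}{1 - \cD^\gamma(x)}, \qquad \psi_\cO'(x) = \frac{(\cD^\gamma)'(x) + x (\cD^\gamma)''(x)}{1 - \cD^\gamma(x)} + \frac{x (\cD^\gamma)'(x)^2}{(1 - \cD^\gamma(x))^2}.
\]
At $x = \tau_\cO = \rho_\cD$ one has $\cD^\gamma(\rho_\cD) = \tau_\cD = r < 1$, and differentiating $\cD^\gamma = z\phi_\cD(\cD^\gamma)$ once, together with the identity $z\phi_\cD'(\cD^\gamma(z)) = \psi_\cD(\cD^\gamma(z))$ at $z = \rho_\cD$, gives $(\cD^\gamma)'(\rho_\cD) = \phi_\cD(r)/(1 - \nu_\cD) < \infty$. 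Hence $\sigma_\cO^2 = \tau_\cO \psi_\cO'(\tau_\cO) < \infty$ iff $(\cD^\gamma)''(\rho_\cD) < \infty$.

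Differentiating the fixed-point equation a second time and evaluating at $z = \rho_\cD$ yields
\[
(\cD^\gamma)''(\rho_\cD)\,(1 - \nu_\cD) = 2 \phi_\cD'(r) (\cD^\gamma)'(\rho_\cD) + \rho_\cD \phi_\cD''(r) ((\cD^\gamma)'(\rho_\cD))^2,
\]
where the prefactor $1 - \nu_\cD$ is strictly positive. Hence $(\cD^\gamma)''(\rho_\cD) < \infty$ iff $\phi_\cD''(r) < \infty$, and expanding $\phi_\cD''(z) = s''(z)/(1-s(z))^2 + 2 s'(z)^2/(1-s(z))^3$ together with the fact that $s(r), s'(r) < \infty$ (the latter from $\nu_\cD < 1$ and $s(r) < 1$) reduces this to $s''(r) < \infty$. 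The main point of care throughout is bookkeeping, verifying that all terms other than $s''(r)$ are finite at the boundary so that $s''(r)$ is indeed the only possible source of divergence in $\sigma_\cO^2$.
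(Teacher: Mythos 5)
Your argument is correct and follows essentially the same route as the paper: part (1) is read off from the case analysis in \eqref{eq:cases} after identifying $\tau_\cD = \rho_{\phi_\cD} = r$, and part (2) propagates the divergence of $s''(r)$ through $\phi_\cD''$ and the twice-differentiated fixed-point equation into $\psi_\cO'(\tau_\cO)$, checking that every other term stays finite. Your explicit justification that $\rho_{\phi_\cD}=r$ fills in a detail the paper leaves implicit, and your expression for $\phi_\cD''$ corrects a small typo in the paper's displayed formula; otherwise the two proofs coincide step by step.
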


We are going to consider weight-sequences of the form 
\begin{align}
\label{eq:weight}
\iota_{k+2} = L_k k^{-\alpha -1} r^{-k}
\end{align}
with $(L_k)_{k \ge 1}$ a slowly varying sequence and $\alpha,r$ positive constants. Slowly varying means that the sequence $L_k$ satisfies $L_{\lfloor tk \rfloor} / L_k \to 1$ as $k$ becomes large for any fixed $t >0$. See for example \cite[Sec. VIII.8]{MR0270403} for further details and standard results on slowly varying functions. Our main result is the following scaling limit.
\begin{theorem}
	\label{TEMAIN}
	Suppose that the face-weights $(\iota_k)_{k\geq3}$ satisfy Equation~\eqref{eq:weight} with $1<\alpha<2$ and $\nu_\cO = 1$. Then 
	\[
	\left( \mO_n^\omega, \left( \frac{nL_n\Gamma(-\alpha)}{1-s(r) }  \right)^{-1/\alpha} d_{\mO_n^\omega} \right) \convdis (\mathscr{L}_\alpha, d_{\mathscr{L}_\alpha})
	\]
	in the Gromov--Hausdorff sense.
\end{theorem}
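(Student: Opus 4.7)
The plan is to implement the two-level tree-encoding strategy described in the introduction. First, by Lemma~\ref{le:regime}, the hypothesis $\nu_\cO = 1$ together with the weight form \eqref{eq:weight} forces $\nu_\cD < 1$ and $\tau_\cD = r$, so the simply generated tree encoding a $\gamma$-weighted dissection is subcritical and sits in the condensation regime of \cite{MR2908619,MR2764126,MR3335012}. I would start with the analytic bookkeeping: the hypothesis $\iota_{k+2} = L_k k^{-\alpha-1} r^{-k}$ transfers, via the Tauberian correspondence (see \cite[Sec. VIII.8]{MR0270403}), into a power-law tail for the step weights of the tree encoding the block-decomposition of $\mO_n^\omega$, whose offspring generating function is $\phi_\cO = 1/(1 - \cD^\gamma)$. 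The resulting tail constant works out to $\Gamma(-\alpha)/(1-s(r))$, which pins down the scaling factor appearing in the statement.

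Next, I would use condensation to describe $\mD_n^\gamma$. Standard results on subcritical simply generated trees yield that the tree encoding $\mD_n^\gamma$ has, with high probability, a unique giant vertex with $n - o(n)$ children; under the Ehrenborg--M\'endez isomorphism this corresponds to a single inner face of degree $n - o(n)$. Thus $\mD_n^\gamma$ is, with high probability, an almost regular polygon decorated with $o(n)$ small sub-dissections on its boundary, and in particular its internal graph metric and outer-boundary metric agree up to an additive $o(n)$ error.

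The third step is to identify the outer boundary of $\mO_n^\omega$ with a discrete looptree derived from the block-decomposition tree $\mT$, where each loop is inflated to match the perimeter of the corresponding block. Since $\mT$ is simply generated with an offspring tail in the $\alpha$-stable universality class, and since every macroscopic block is essentially a polygon by the previous step, I would invoke Curien--Kortchemski~\cite[Thm. 4.1]{MR3286462} to obtain Gromov--Hausdorff convergence of the rescaled boundary of $\mO_n^\omega$ to $(\mathscr{L}_\alpha, d_{\mathscr{L}_\alpha})$ with exactly the scaling announced in the statement. This step is formally parallel to the boundary scaling limit of Richier~\cite{2017arXiv170401950R}.

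Finally, I would bridge the boundary-metric convergence to the full-map convergence by showing that internal geodesics of $\mO_n^\omega$ cannot significantly shortcut the boundary metric on the scale $n^{1/\alpha}$. The only available shortcuts come from chords inside individual blocks; within macroscopic blocks they save $o(\text{perimeter})$ by step two, and the aggregate contribution of small blocks to geodesic length is $o(n^{1/\alpha})$. Consequently the natural inclusion of the boundary into $\mO_n^\omega$ is a near-isometry at the relevant scale, and the boundary convergence upgrades to Gromov--Hausdorff convergence of $\mO_n^\omega$ itself. I expect this last step to be the main obstacle: the comparison between map and boundary metrics must hold uniformly over vertex pairs and simultaneously across all macroscopic blocks, which will require tail bounds on the sizes of non-giant blocks together with a careful joint analysis of the block tree and its internal dissections.
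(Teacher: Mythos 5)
Your overall architecture matches the paper's (condensation for the Boltzmann dissections, identification of the boundary with a discrete looptree plus an appeal to Curien--Kortchemski, then a comparison between the map metric and the boundary metric), but there is a concrete error in your second step that invalidates the fourth. In the condensation regime for a subcritical simply generated tree with mean offspring $\nu_\cD < 1$ conditioned on having $n$ vertices, the giant vertex has outdegree $(1-\nu_\cD)n + o_p(n)$, \emph{not} $n - o(n)$; here Lemma~\ref{le:regime} gives $1 - \nu_\cD = r/(1-r) \in (0,1)$. So the giant face of $\mD_n^\gamma$ has degree only a constant fraction of $n$, and the small dissections attached to its boundary, while individually of size $O_p(\tilde L_n n^{1/\alpha})$, together contain a macroscopic number $\nu_\cD n + o_p(n)$ of vertices. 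Consequently the internal metric of $\mD_n^\gamma$ does \emph{not} agree with its outer-boundary metric up to $o(n)$: a geodesic between two boundary vertices projects onto the giant face and traverses roughly $(1-\nu_\cD)$ times as many edges as the boundary arc between them. The correct statement, which is the paper's Lemma~\ref{le:main1}, is that $(\mO_n^\omega, d_{\mO_n^\omega})$ is within $o_p((L_n n)^{1/\alpha})$ in Gromov--Hausdorff distance of the boundary \emph{rescaled by the factor $1-\nu_\cD$}. Your fourth step, asserting that the inclusion of the boundary into the map is a near-isometry at scale $n^{1/\alpha}$, is therefore false: chords shorten boundary distances by a constant multiplicative factor, not by $o(n^{1/\alpha})$.

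This error also hides a bookkeeping discrepancy in your third step. The boundary $\bar{\mO}_n^\omega$ converges after rescaling by $b_n^{-1}$ with $b_n = \left(nL_n\Gamma(-\alpha)/(1-s(r))\right)^{1/\alpha}\frac{1-r}{r}$; the extra factor $\frac{1-r}{r}$ comes from the tail constant $\frac{L_k(1-r)^{\alpha+1}}{(1-s(r))r^\alpha}$ of the offspring law of the leaf-conditioned tree $\tau_n^\cO$ together with the normalization by $\Pr{\xi=0} = 1-r$, so the boundary does \emph{not} converge with the scaling announced in the theorem. The theorem's constant is the product of $b_n^{-1}$ with the contraction factor $1-\nu_\cD = r/(1-r)$; your two omissions happen to cancel in the final formula, but neither intermediate claim is correct as stated. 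Finally, even with the multiplicative correction in place, the uniform control over all vertex pairs that you flag as ``the main obstacle'' is genuinely nontrivial: the paper handles it by bounding the distortion by the maximal ancestral sum of block penalties, splitting that sum into blocks of outdegree below and above $\delta b_n$, controlling the former via a Lukasiewicz-path argument (Lemma~\ref{le:core}, adapted from the proof of \cite[Thm. 4.1]{MR3286462}) and the latter via the fact that only $O_p(1)$ blocks are macroscopic.
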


Here $(\mathscr{L}_\alpha, d_{\mathscr{L}_\alpha})$ denotes  the $\alpha$-stable loop-tree constructed by Curien and Kortchemski~\cite{MR3286462} which we briefly described in Subsection \ref{ss:looptree}. The Gromov--Hausdorff distance $d_{\textsc{GH}}( (X,d_X), (Y,d_Y))$ between two compact metric spaces $(X,d_X)$ and $(Y, d_Y)$ is defined as the infimum of Hausdorff distances of isometric embeddings into any possible common metric space. Equivalently, it may be defined as the infimum \[
d_{\textsc{GH}}( (X,d_X), (Y,d_Y)) = \frac{1}{2}\inf_{R} \mathrm{dis}(R)
\] of distortions $\mathrm{dis}(R)$ of all  correspondences $R$ between the two spaces. Here correspondence means that $R \subset X \times Y$ is a relation where to any $x\in X$ corresponds at least one $y \in Y$ and vice versa. The distortion of such an object is defined by \[\mathrm{dis}(R) = \sup_{(x_1, y_2), (x_2,y_2) \in R}|d_X(x_1, x_2) - d_Y(y_1,y_2)|.\] We refer the reader to~\cite[Sec. 6]{MR2571957}  for details.

\begin{remark}
	\label{re:weight}
	The requirements of Theorem~\ref{TEMAIN} are satisfiable. For each $\alpha>1$ there are constants $c,r>0$ such that the weight-sequence 
	\[
		\iota_{k+2} = c k^{-\alpha-1} r^{-k}, \quad k \ge 1
	\]
	satisfies $\nu_\cO=1$. In this setting, $\sigma_\cO = \infty$ holds precisely for $\alpha \le 2$.
	To see this, let $\zeta(s) = \sum_{k \ge 1} k^{-s}$ denote the Riemann zeta function. In order for the requirements of Lemma~\ref{le:regime} to be fulfilled, we may choose $c>0$ sufficiently small such that
	\begin{align}
	\label{eq:weight1}
	c \zeta(\alpha+1) < 1 \quad \text{and} \quad \frac{c \zeta(\alpha)}{1 - c\zeta(\alpha+1)} < 1.
	\end{align}
	There is a constant $c_0(\alpha) >0$ such that this holds precisely for all $0 < c < c_0(\alpha)$. Finally, we let $0 < r < 1/2$ denote the unique parameter with
	\begin{align}
	\label{eq:weight2}
	\frac{r}{1-r} = 1 - \frac{c \zeta(\alpha)}{1 - c\zeta(\alpha+1)}.
	\end{align}

	More generally, for any
	$1 < \alpha < 2$  and any slowly varying sequence $(L_k)_k$ one may always find an $0<r<1$ and a slowly  varying sequence $(L'_k)_k$ which is asymptotically equivalent to $L_k$ up to multiplication by a constant such that the weights $L'_k k^{-\alpha-1}r^{-k}$ satisfy the requirements of Theorem~\ref{TEMAIN}.
\end{remark}

In the proof of Theorem~\ref{TEMAIN} we consider the boundary $\bar{\mO}_n^\omega$ of the map $\mO_n^\omega$. That is, $\bar{\mO}_n^\omega$ is the map obtained from $\mO_n^\omega$ by deleting all edges that do not lie on the frontier of the outer face. Our main observation is that the distortion between the space $(\mO_n^\omega, d_{\mO_n^\omega})$ and the contracted boundary $( (\bar{\mO}_n^\omega, (1-\nu_\cD)d_{\bar{\mO}_n^\omega})$ lies in $o_p( (nL_n)^{-1/\alpha})$. The scaling limit for $\mO_n^\omega$ then follows by a limit for $\bar{\mO}_n^\omega$, which converges towards the $\alpha$-stable looptree after rescaling the metric by $b_n^{-1}$ with 
\[
b_n = \left( \frac{nL_n\Gamma(-\alpha)}{1-s(r) }  \right)^{1/\alpha} \frac{1-r}{r}.
\]
Thus the scaling factor in Theorem~\ref{TEMAIN} consists of the product of the two factors $1-\nu_\cD = r/(1-r)$ and~$b_n^{-1}$.

In order to get convergence of the boundary, we argue similarly as Richier \cite{2017arXiv170401950R}, who gave such a scaling limit for the boundary of Boltzmann planar maps that are not required to be outerplanar. That is, we approximate $\bar{\mO}_n^\omega$ by a  discrete looptree that shares the same set of vertices and is associated to a critical Galton--Watson tree conditioned on having many leaves whose offspring distribution lies in the domain of attraction of an $\alpha$-stable law. The limit then follows by scaling limits for discrete looptrees by  Curien and Kortchemski~\cite{MR3286462}.

Actually, our arguments even show that if we equip all discrete spaces with the uniform measure on their points, then the Gromov--Hausdorff--Prokhorov distance between the outerplanar map and the discrete looptree is negligible, since we only use approximation arguments with spaces sharing the same vertex set. So extending the scaling limit of Curien and Kortchemski \cite[Thm. 4.1]{MR3286462} to the Gromov--Hausdorff--Prokhorov distance would automatically entail a corresponding strengthening of Theorem~\ref{TEMAIN}.

\subsection{The circle regime}
\label{sec:circle}

We are interested in the setting $0<\nu_\cO < 1$. Letting $r$ denote the radius of convergence of the series $s(z) = \sum_{k \ge 1} \iota_{k+2} z^k$, we may characterize this setting as follows.
\begin{lemma}
	\label{le:reg2}
		It holds that $\nu_\cO < 1$ if and only if  
		\[
		r,s(r) <1 \quad \text{and} \quad \frac{r}{1-r} + \frac{r s'(r)}{1 - s(r)} < 1.
		\]
		If this is the case, then $\tau_\cD = r$,
		\[
		\nu_\cD = \frac{r s'(r)}{1 - s(r)}<1, \quad \text{and} \quad \nu_\cO = \frac{r}{(1-r)(1- \nu_\cD)}.
		\]
\end{lemma}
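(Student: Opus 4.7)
The plan is to work case by case using Equation~\eqref{eq:cases}. Cases~1 and~2 both give $\nu_\cO=\infty$, and case~4 ($\nu_\cD=0$) only occurs in the degenerate situations where $s$ is trivial or $r=0$; the content of the lemma therefore lives in case~3, where $0<\nu_\cD<1$ and $0<\rho_{\phi_\cD}<1$ and
\[
\nu_\cO=\frac{\tau_\cD}{(1-\tau_\cD)(1-\nu_\cD)}.
\]
My task is to show that the hypotheses of the lemma are exactly what place us in case~3 with the above expression strictly less than~$1$, and along the way to identify $\tau_\cD=r$ and $\nu_\cD=rs'(r)/(1-s(r))$.

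First I would locate $\rho_{\phi_\cD}$. Since $\phi_\cD(z)=1/(1-s(z))$, one has $\rho_{\phi_\cD}=\min(r,z^\ast)$, where $z^\ast\in(0,r]$ denotes the smallest positive solution of $s(z)=1$ when such a solution exists. The key observation is that $\nu_\cD<\infty$ forces $s(r)<1$: indeed, if $s(r)\ge 1$ then $z^\ast$ exists with $z^\ast\le r$ and $1-s(x)\to 0$ as $x\nearrow z^\ast$, while $xs'(x)$ stays bounded below by a positive constant (because $s$ has non-negative coefficients and is non-trivial), so $\psi_\cD(x)=xs'(x)/(1-s(x))\to\infty$. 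Conversely, if $s(r)<1$ then $\rho_{\phi_\cD}=r$ and a direct evaluation gives $\nu_\cD=rs'(r)/(1-s(r))$. Assuming the latter, the case distinction defining $\tau_\cD$ under $\nu_\cD\le 1$ yields $\tau_\cD=\rho_{\phi_\cD}=r$.

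Next I would carry out the algebra. Assuming we are in case~3 with $\tau_\cD=r<1$, the inequality $\nu_\cO<1$ is equivalent, after clearing the positive denominator $(1-r)(1-\nu_\cD)$, to $r<(1-r)(1-\nu_\cD)$, which rearranges to $\frac{r}{1-r}+\nu_\cD<1$. Substituting $\nu_\cD=rs'(r)/(1-s(r))$ recovers the claimed condition, completing the forward direction. For the converse, given $r,s(r)<1$ and $\frac{r}{1-r}+\frac{rs'(r)}{1-s(r)}<1$, the second summand is automatically $<1$, which identifies $\nu_\cD<1$, places us in case~3 via the dichotomy for $\rho_{\phi_\cD}$ just established (using $\rho_{\phi_\cD}=r<1$), and the reverse algebraic manipulation then yields $\nu_\cO<1$. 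The main subtle point is the dichotomy $s(r)\ge 1\Rightarrow\nu_\cD=\infty$ versus $s(r)<1\Rightarrow\nu_\cD=rs'(r)/(1-s(r))<\infty$; this rests on the non-triviality of $s$ so that $xs'(x)$ is bounded below by a positive quantity near the would-be singularity, and everything else is routine algebra combined with~\eqref{eq:cases}.
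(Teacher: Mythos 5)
Your proposal is correct and follows essentially the same route as the paper, which simply notes that the proof is entirely analogous to that of the first claim of Lemma~\ref{le:regime}: reduce to case~3 of Equation~\eqref{eq:cases}, identify $\rho_{\phi_\cD}=\tau_\cD=r$ and $\nu_\cD=rs'(r)/(1-s(r))$ from $\phi_\cD=(1-s)^{-1}$, and translate $\nu_\cO=\tau_\cD/((1-\tau_\cD)(1-\nu_\cD))<1$ into the stated inequality. Your extra care with the dichotomy $s(r)\ge 1\Rightarrow\nu_\cD=\infty$ and with the degenerate case $\nu_\cD=0$ only makes explicit what the paper leaves implicit.
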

The proof is entirely analogous to the proof of the first claim of Lemma~\ref{le:regime}. We impose a regularity assumption, focusing on weights of the form
\begin{align}
\label{eq:we}
\iota_{k+2} = L_k k^{-\alpha -1} r^{-k}
\end{align}
with $(L_k)_{k \ge 1}$ a slowly varying sequence and $\alpha,r$ positive constants.

\begin{theorem}
		\label{te:circ}
		Suppose that the face-weights $(\iota_k)_{k\geq3}$ satisfy Equation~\eqref{eq:we} with $\alpha>1$ and $0<\nu_\cO < 1$. Let $\mu_n$ denote a uniformly at random chosen point of $\mO_n^\omega$, and $\mu$ a uniformly selected point on the circle  $C^1 = \{z \in \ndC \mid |z|=(2\pi)^{-1}\}$ of unit circumference. Then
		\[
		 \left(\mO_n^\omega, \frac{1-r}{nr} d_{\mO_n^\omega}, \mu_n \right) \convdis (C^1, d_{C^1}, \mu) 
		\]
		in the Gromov--Hausdorff--Prokhorov sense.
\end{theorem}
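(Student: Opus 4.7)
My plan is to exploit the two-level condensation structure available in this regime to show that, after rescaling its metric by $(1-r)/(nr)$, $\mO_n^\omega$ is close in the Gromov--Hausdorff--Prokhorov sense to a single macroscopic cycle in the map, whose length is asymptotically $nr/(1-r)=n\nu_\cO(1-\nu_\cD)$, and whose limit is the unit-circumference circle $C^1$.

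First I would apply the block-decomposition tree-like encoding captured by the identity $\cO^\omega(z)=z\phi_\cO(\cO^\omega(z))$, so that the map $\mO_n^\omega$ corresponds to a simply generated tree of size $n$ with branching weights $[\phi_\cO]_k$. Under the power-law assumption~\eqref{eq:we}, standard transfer theorems applied to the singular expansion of $\cD^\gamma$ at $\rho_\cD$ (inherited from the non-analytic behaviour of $\phi_\cD=1/(1-s)$ at $r$) show that $[\phi_\cO]_k$ displays a regularly varying tail of exponent $-\alpha-1$. Combined with $\nu_\cO<1$, this places the tree in Janson's subcritical condensation regime \cite[Sec.~19--20]{MR2908619}, yielding with probability tending to one a unique giant $2$-connected block $\mB_n$ of macroscopic size, together with dangling outerplanar pieces attached at cut-vertices of $\mB_n$ whose individual diameters are $o_p(n)$.

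Next, I would apply an analogous condensation analysis inside $\mB_n$, which (conditionally on its size) is a $\gamma$-weighted Boltzmann dissection and hence encoded by a simply generated tree with branching weights $[\phi_\cD]_k$. The assumption $\nu_\cD<1$ from Lemma~\ref{le:reg2} puts this tree in the condensation regime and yields a unique giant inner face $\mF_n$ of $\mB_n$, all other inner faces being of bounded typical degree. Combining the two condensation layers together with the identity $\nu_\cO(1-\nu_\cD)=r/(1-r)$ from Lemma~\ref{le:reg2}, a careful accounting of constants shows that the boundary cycle $\partial \mF_n$ carries $\tfrac{r}{1-r}n(1+o_p(1))$ vertices.

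To conclude the Gromov--Hausdorff--Prokhorov convergence I would introduce a correspondence $R_n\subset \mO_n^\omega\times\partial \mF_n$ sending every vertex of $\mO_n^\omega$ to a closest vertex on $\partial \mF_n$. The distortion of $R_n$ is bounded by the sum of two quantities that should both be $o_p(n)$: (i) $\max_u d_{\mO_n^\omega}(u,\partial \mF_n)$, controlled by the dangling-piece diameters from Step~1 and the diameters of the ``pockets'' cut off from $\mF_n$ by chords of $\mB_n$ from Step~2, and (ii) the discrepancy between $d_{\mO_n^\omega}(u,v)$ and the cyclic distance on $\partial \mF_n$ for $u,v\in \partial \mF_n$. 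After rescaling by $(1-r)/(nr)$ the distortion is $o_p(1)$, giving the Hausdorff part; the Prokhorov convergence of $\mu_n$ to $\mu$ follows from an equidistribution argument exploiting invariance of the Boltzmann distribution under cyclic re-rooting of the polygon of $\mB_n$, which makes the pushforward of $\mu_n$ along $R_n$ close to the uniform measure on $\partial \mF_n$. The hardest step will be (ii): ruling out macroscopic chord-shortcuts across the boundary of the giant face, which should boil down to showing that the longest chord span of $\mB_n$ off $\partial \mF_n$ is $o_p(n)$, a consequence of the condensation picture for the dissection $\mB_n$ forcing such chords to remain within small pockets.
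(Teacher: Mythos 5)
Your overall strategy --- the block-decomposition tree with branching weights $[z^k]\phi_\cO(z)$, a double condensation (a giant block in $\cT_n^\cO$, then a giant face inside that block), and a correspondence sending every vertex of the map to a nearby vertex of the boundary cycle of the giant face --- is exactly the route the paper takes. The genuine gap is in your ``careful accounting of constants'', and it is not a formality. For a subcritical Galton--Watson tree with offspring mean $m<1$ conditioned to have $n$ vertices, condensation produces a maximal outdegree $(1-m)n+o_p(n)$; applied to $\cT_n^\cO$ (mean $\nu_\cO$) and then to the tree encoding the giant block (mean $\nu_\cD$), this yields a giant block with $(1-\nu_\cO)n+o_p(n)$ vertices and a giant face of degree $(1-\nu_\cD)(1-\nu_\cO)n+o_p(n)$. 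This is precisely the constant the paper's own proof produces: its final display is normalized by $\tfrac{1}{n(1-\nu_\cD)(1-\nu_\cO)}$. Your claimed circumference $n\nu_\cO(1-\nu_\cD)=\tfrac{nr}{1-r}$ does not follow from the structure you describe: the identity $\nu_\cO(1-\nu_\cD)=\tfrac{r}{1-r}$ from Lemma~\ref{le:reg2} is correct as algebra, but what the two condensation layers deliver is $(1-\nu_\cO)(1-\nu_\cD)=(1-\nu_\cD)-\tfrac{r}{1-r}$, which equals $\tfrac{r}{1-r}$ only when $1-\nu_\cD=\tfrac{2r}{1-r}$. So either you must exhibit a different cycle of length $\tfrac{nr}{1-r}$ (the one you construct does not have that length), or you must reconcile the normalization in the statement with the one coming out of the condensation analysis; as written, this step fails.

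Two smaller points. First, the Prokhorov part cannot rest on ``invariance of the Boltzmann distribution under cyclic re-rooting'': the root edge and the portion of $\cT_n^\cO$ above the condensation vertex break the cyclic symmetry, and even granting exchangeability of the pendant pieces one still needs a uniform law of large numbers for their partial sums. The paper gets this from Kortchemski's functional limit theorem for $Z_k=|T_n^1|+\cdots+|T_n^k|$, which shows $\max_k|Z_k-k/(1-\nu_\cO)|=O_p(\tilde L_n n^{1/\alpha})$ and hence that the boundary index carrying a uniform map-vertex is asymptotically uniform; some version of this estimate is unavoidable. Second, your ``hardest step (ii)'' is easier than you fear: the complement of the giant face inside the block decomposes into pockets, each attached along a single chord of the face boundary, so any path between two face-boundary vertices can be projected onto the boundary cycle without increasing its length and no macroscopic shortcut can exist. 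The real work, which the paper again does via the law of large numbers for the pocket sizes, is comparing the face-boundary cycle metric with distances measured over the full vertex set of the map.
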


The Gromov--Hausdorff--Prokhorov distance $d_{\textsc{GHP}}( (X, d_X, \mu_X), (Y, d_Y, \mu_Y))$ between two compact metric spaces $(X, d_X)$ and $(Y, d_Y)$ equipped with Borel probability measures $\mu_X, \mu_Y$ may be defined as the infimum
\[
d_{\textsc{GHP}}( (X, d_X, \mu_X), (Y, d_Y, \mu_Y)) = \inf_{E, \varphi_X, \varphi_Y} \max(d_{\textsc{H}}(\varphi_X(X), \varphi_Y(Y)), d_{\textsc{P}}(\mu_X\phi_X^{-1}, \mu_Y\phi_Y^{-1} ) )
\]
with the index ranging over all isometric embeddings $\varphi_X: X \to E$ and $\varphi_Y: Y \to E$ into any common metric space $E$. Here $d_{\textsc{H}}(\varphi_X(X), \varphi_Y(Y))$ denotes the Hausdorff distance of the images of $X$ and $Y$, and $d_{\textsc{P}}(\mu_X\varphi_X^{-1}, \mu_Y\varphi_Y^{-1} )$ denotes the Prokhorov distances of the push-forwards of the measures $\mu_X$ and $\mu_Y$ along $\varphi_X$ and $\varphi_Y$. We refer the reader to~\cite[Sec. 6]{MR2571957} for details.

 The idea of Theorem~\ref{te:circ} is that the geometric shape of $\mO_n^\omega$ will be determined by a giant $2$-connected block, whose size is about $(1-\nu_\cO)n$. Hence the scaling limit of $\mO_n^\omega$ will be the same as the limit of the dissection $\mD_n^\gamma$ stretched by $1/ (1 -\nu_\cO)$. A priori, various qualitatively different continuum limits are known for Boltzmann dissections, and $\nu_\cD$ tells us which to expect. However, Equation~\eqref{eq:cases} guarantees that if $\nu_\cO<1$ then we also have $\nu_\cD<1$. So, by the same arguments, the geometric shape of $\mD_n^\gamma$ in this regime is determined by a giant face whose degrees is roughly $(1- \nu_\cD)n$. In total, $\mO_n^\omega$ looks like a circle with circumference $n(1-\nu_\cO)(1 - \nu_\cD)n = nr / (1-r)$. Compare also with a similar result for the boundary of a percolation cluster in the uniform random triangulation given by Curien and Kortchemski~\cite[Thm. 1.2]{MR3405619}.

\begin{remark}
	In the case $\nu_\cO = 0$ we expect that at least for certain choices of weights like $\iota_k \sim (k!)^\beta$ with $\beta >0$ the rescaled map $(\mO_n^\omega, n^{-1} d_{\mO_n^\omega})$ converges by similar arguments towards the circle $C^1$ of unit circumference. This is based on estimates on the sizes of fringe subtrees dangling from the root in simply generated trees in the super-condensation regime, see~\cite[Thm. 2.5]{MR2860856} and compare with~\cite[Prop. 3.5]{MR3342658}.
\end{remark}

\subsection{The Brownian tree regime}
\label{sec:btree}
The Brownian tree $(\CRT, d_{\CRT})$ was introduced by Aldous in his pioneering papers~\cite{MR1085326,MR1166406,MR1207226}. If $\nu_\cO \ge 1$ and $\sigma_\cO < \infty$, we would expect that
\begin{align}
\label{eq:crtlim}
(\mO_n^\omega, c_\omega n^{-1/2} d_{\mO_n^\omega}) \convdis (\CRT, d_{\CRT})
\end{align}
in the Gromov--Hausdorff sense for some constant $c_\omega$ that depends on the $\omega$-weights. This has been verified first by Caraceni~\cite{caraceni2016} for uniform outerplanar maps, and later in the more general subcase $\nu_\cO > 1$ \cite[Thm. 6.60]{2016arXiv161202580S}. However,  the arguments used there require certain random variables to have finite exponential moments, which is no longer the case if~$\nu_\cO = 1$.

It is reasonable to expect that \eqref{eq:crtlim} still holds if $\nu_\cO = 1$ and $\sigma_\cO < \infty$. (See also \cite[Sec. 5.3]{MR3382675}.) This is supported  by the fact \cite[Lem. 6.6.1]{2016arXiv161202580S} that in this setting the diameter $\Di(\mO_n^\omega)$ of $\mO_n^\omega$ has a stochastic lower bound of order  $n^{1/2}$ and satisfies the tail-bound
\[
\Pr{\Di(\mO_n^\omega) \ge x} \le C \exp(-c x^2/n)
\]
for all $n \ge 1$ and $x \ge 0$. This yields tightness of $(\mO_n^\omega, n^{-1/2}d_{\mO_n^\omega})$ and hence weak convergence along subsequences, but one would still have to verify uniqueness and properties of that limit.

Note that we may construct concrete weight-sequences that lie in the Brownian tree regime: The uniform case where $\iota_k = 1$ for all $k \ge 3$ is known to belong to the case $\nu_\cO = \infty > 1$. The case $\nu_\cO=1$ and $\sigma_\cO < \infty$ may be attained by defining the weight sequence $\iota$ according to Remark~\ref{re:weight} for $\alpha>2$.

It is also natural to wonder what happens if $\sigma_\cO=\infty$ and the branching law of the simply generated tree in Lemma~\ref{le:couplingo1} below lies in the domain of attraction of a normal law, but we do not aim to pursue this question here.

\section{On tree-like combinatorial representations}
\label{sec:comb}

Before starting with the proofs of our main results, we discuss the tree-like structures that may be used to encode and sample face-weighted dissections and outerplanar maps. These structures may be encoded in a unified way by so called enriched trees and Schr\"oder enriched parenthesizations. 
Roughly speaking, given a combinatorial class $\cR$ an \emph{$\cR$-enriched plane tree} is a pair $(T, \beta)$ of a plane tree $T$ and a function $\beta$ that assigns to each vertex $v \in T$ a $d_T^+(v)$-sized $\cR$-structure $\beta(v)$.  By certain general principles these representations entail couplings of the random maps under consideration with simply generated trees indexed by their number of vertices or leaves. We briefly recall relevant parts of this theory following  \cite[Sec. 6.1.3, Sec. 6.1.4]{2016arXiv161202580S}, and refer the reader to this source for further details and background information.

\subsection{Dissections}

\begin{figure}[t]
	\centering
	\begin{minipage}{1.0\textwidth}
		\centering
		\includegraphics[width=1.0\textwidth]{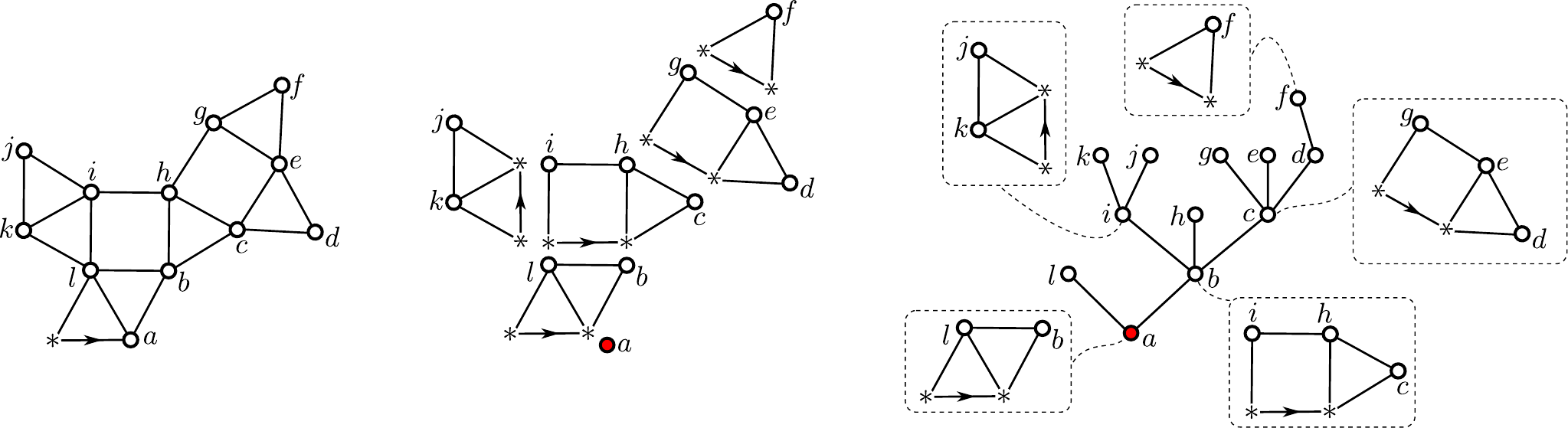}
		\caption{Decomposition of dissections of polygons into chord-restricted components.}
		\label{fi:decdis}
	\end{minipage}
\end{figure}

We consider face-weighted dissections having a counter-clockwise oriented root-edge on the boundary whose origin does not contribute to the total size. In general we refer to vertices that do not contribute to the total size as $*$-placeholder vertices. Any dissection  may be decomposed in a tree-like fashion into \emph{chord restricted} components as illustrated in the middle part of Figure~\ref{fi:decdis}. Here the term chord restricted dissection refers to a face-weighted dissections where both the origin and destination of the root-edge do not count as regular vertices and hence do not contribute to the total size, and where each chord must be incident to the destination of the root-edge.  The idea of the decomposition is to start with the maximal chord-restricted sub-dissection containing the root-edge, and then recursively continue in this manner with the ordered list of dissections attached to its boundary. 

Having decomposed a dissection $D$ into its components, we may form a tree $T$ as illustrated on the right hand side of Figure~\ref{fi:decdis} such that its vertices correspond bijectively to the non-$*$-vertices of~$D$.  The idea is that the root  of $T$ is given by the destination of the root-edge of $D$, and the offspring of the root is given by the non-$*$-placeholder vertices of the unique chord-restricted component $C$ containing the root-edge of $D$. Each non-$*$-vertex $v$ of $C$ corresponds to the edge that lies counter-clockwise next to it on the boundary of $C$, and hence also corresponds to the dissection $D(v)$ attached to $C$ via this edge. The tree $T$ is then constructed in a recursive manner such that the offspring of $v$ corresponds to the non-$*$-vertices of the chord-restricted component of $D(v)$ that contains the root-edge of $D(v)$ and so on. The tree $T$ does not capture all informations necessary to reconstruct the dissection $D$ from it. For this reason, we remember for each vertex $u \in T$ the unique chord-restricted component $\beta(u)$ of $D$ whose non-$*$-vertices correspond to the offspring set of $u$ in $T$. Thus the pair $(T, \beta)$ is a tree enriched with chord-restricted dissections from which we may reconstruct the corresponding dissection $D$.

Any chord-restricted dissection is  uniquely determined by the ordered list of its face-degrees in some canonical order. By general enumerative principles it follows that the sum of weights of chord restricted dissections with total size $n$ is given by the $n$th coefficient of the generating series $\phi_\cD(z) = 1 / (1- s(z))$. Let $\boldsymbol{\gamma} = (\gamma_k)_{k \ge 0}$ be the weight sequence corresponding to the coefficients $\gamma_k = [z^k] \phi_\cD(z)$. We may view $\boldsymbol{\gamma}$ as a branching weight sequence.  By \cite[Lem. 6.1]{2016arXiv161202580S} the random face-weighted dissection $\mD_n^\gamma$ corresponds to a simply generated tree decorated with conditionally independently chosen chord-restricted decorations:

\begin{lemma}[Dissections and simply generated trees]
	\label{le:dcoup}
	The dissection corresponding to the  following random enriched tree $(\cT_n^\cD, \beta_n^\cD)$ is distributed like the random face-weighted dissection $\mD_n^\gamma$.
	\begin{enumerate}
		\item Let $\cT_n^\cD$ be an $n$-vertex simply generated tree with weight-sequence $\boldsymbol{\gamma}$. 
		\item For each vertex $v$ of $\cT_n^\cD$ let $\beta_n^\cD(v)$ be drawn with probability proportional to its weight among all $d^+_{\cT_n^\cO}(v)$-sized chord restricted dissections.
	\end{enumerate}
\end{lemma}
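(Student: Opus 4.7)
The plan is to argue directly from the combinatorial decomposition described just before the statement. First I would spell out the bijection: to a face-weighted dissection $D \in \cD^\gamma$ of size $n$ (with counter-clockwise root-edge whose origin is a $*$-placeholder) one associates a pair $(T,\beta)$ where $T$ is a plane tree whose vertex set is in bijection with the non-$*$-vertices of $D$, and $\beta(v)$ is a chord-restricted dissection of size $d_T^+(v)$. The root of $T$ corresponds to the destination of the root-edge of $D$; its children correspond in canonical counter-clockwise order to the non-$*$-vertices of the unique maximal chord-restricted component $C$ carrying the root-edge; and one recurses into each sub-dissection hanging off $C$. I would verify bijectivity by writing down the inverse: given $(T,\beta)$, glue the $\beta(v)$'s together iteratively along the edges prescribed by the tree structure. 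Under this map the total size is preserved because every non-$*$-vertex of $D$ is counted exactly once as a vertex of $T$.

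Second, I would check the factorisation of the $\gamma$-weight. Every inner face of $D$ lies in exactly one chord-restricted component $\beta(v)$, and the inner faces of $D$ are precisely the disjoint union of those of the $\beta(v)$. Consequently
\[
\gamma(D) = \prod_{v \in T} \gamma(\beta(v)),
\]
where on the right $\gamma(\cdot)$ denotes the same product of $\iota_{|F|}$'s applied to the inner faces of a chord-restricted dissection. I would also justify that the total $\gamma$-weight of size-$k$ chord-restricted dissections equals $\gamma_k := [z^k]\phi_\cD(z)$: a chord-restricted dissection decomposes canonically as a finite sequence of inner faces incident to the destination of the root-edge, where a face of degree $j+2$ contributes $j \ge 1$ non-$*$-vertices and weight $\iota_{j+2}$. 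The class of chord-restricted dissections is therefore the $\Seq$-construction applied to single faces, with generating series $1/(1-s(z)) = \phi_\cD(z)$.

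Finally, I would combine these ingredients. By the definition of a simply generated tree with branching weights $\boldsymbol{\gamma}$ and by the conditional independence of the decorations,
\[
\Prb{(\cT_n^\cD,\beta_n^\cD) = (T,\beta)} \;\propto\; \prod_{v \in T} \gamma_{d_T^+(v)} \cdot \prod_{v \in T} \frac{\gamma(\beta(v))}{\gamma_{d_T^+(v)}} \;=\; \prod_{v \in T} \gamma(\beta(v)) \;=\; \gamma(D),
\]
for $|T| = n$ and $D$ the dissection associated with $(T,\beta)$. Pushing forward through the bijection yields the law of $\mD_n^\gamma$, since both normalising constants coincide with $\sum_{|D|=n}\gamma(D)$.

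The main obstacle I expect is not the probability computation but the bookkeeping of $*$-placeholder vertices: one must verify carefully that the out-degree $d_T^+(v)$ equals the size of $\beta(v)$ (that is, the count of non-$*$-vertices of $\beta(v)$, rather than any face-degree in $D$), and that the $\Seq$-decomposition of a chord-restricted dissection respects the face-weight product. Once the bijection and the generating-series identity are in place, the coupling is immediate from the two product expressions above.
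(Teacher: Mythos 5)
Your proposal is correct and follows exactly the route the paper intends: the paper does not give a self-contained proof but defers to \cite[Lem.~6.1]{2016arXiv161202580S} after describing the same Ehrenborg--M\'endez-type decomposition into chord-restricted components, the identification of $\phi_\cD(z)=1/(1-s(z))$ as the weight generating series of those components, and the resulting product structure of the Boltzmann weight. Your explicit verification of the bijection, the weight factorisation $\gamma(D)=\prod_{v\in T}\gamma(\beta(v))$, and the cancellation of the $\gamma_{d_T^+(v)}$ factors in the pushforward computation supplies precisely the details the cited general lemma encapsulates.
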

If $\nu_\cD>0$ then $\cT_n^\cD$ is distributed like a Galton--Watson tree $\cT^\cD$ conditioned on having $n$ vertices, with the offspring distribution having probability generating function $\phi_\cD(\tau_\cD z) / \phi_\cD(\tau_\cD)$ \cite[Sec. 2, Sec. 4, Rem. 7.9]{MR2908619}.

As we stated above, chord restricted dissections correspond to ordered sequences of faces, which is reflected in the fact that their generating function $\phi_\cD(z)$ is the composition of the series $1/(1-z)$ associated with ordered sequences and the generating function $s(z)$ for the faces. Hence drawing a chord restricted dissection of a given size with probability proportional to its weight is an example of a Gibbs partition, a term phrased by Pitman~\cite{MR2245368} in his survey on combinatorial stochastic processes. This will allow us to apply results for this general model of random partitions later on in the proof of our main theorems.

\subsection{Outerplanar maps} \label{ss:optrees}

\begin{figure}[t]
	\centering
	\begin{minipage}{1.0\textwidth}
		\centering
		\includegraphics[width=1.0\textwidth]{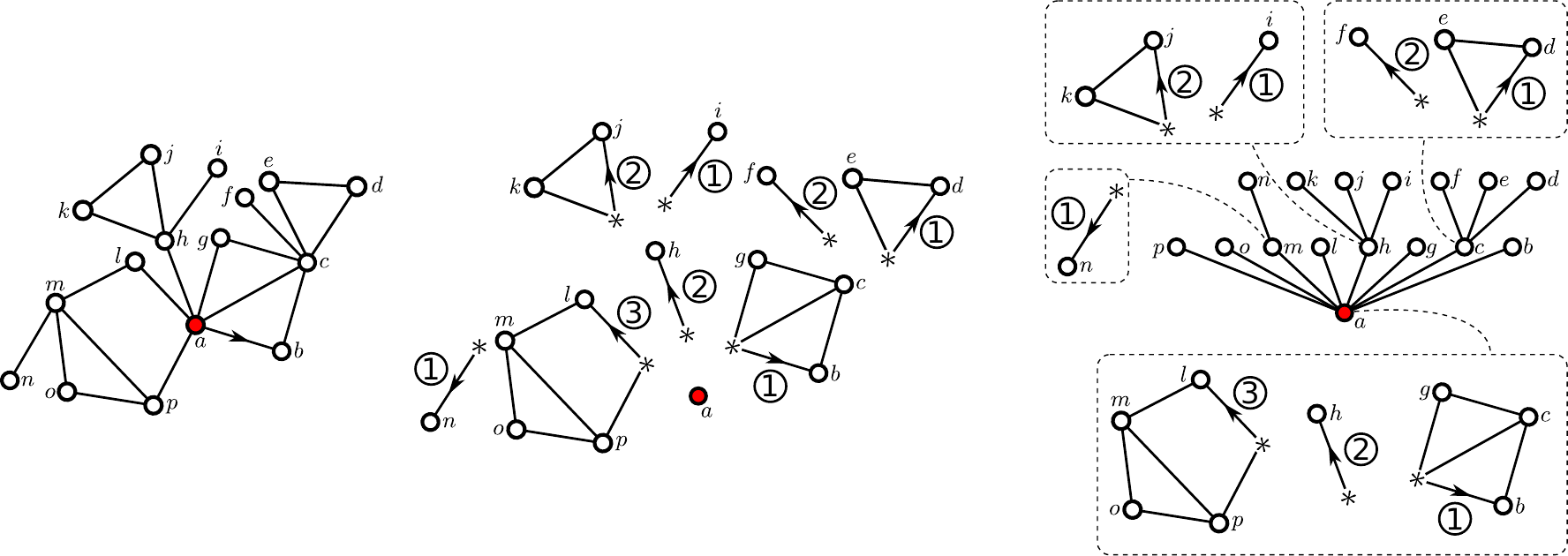}
		\caption{Enriched tree decomposition of outerplanar maps.}
		\label{fi:outervertex}
	\end{minipage}
\end{figure}

As illustrated in Figure~\ref{fi:outervertex}, outerplanar maps may be encoded in terms of trees enriched with ordered sequences of dissections. The vertices of the map correspond bijectively with the vertices of the tree. The idea is that outerplanar maps consist of collections of dissections glued together, and the precise information on which vertices should be glued together may be encoded by a plane tree. Specifically,  given a  tree enriched with ordered sequences of dissections $(T,\beta)$  such as on the right side of Figure~\ref{fi:outervertex}, we may form the corresponding outerplanar map as follows. We start with the root $v$ of $T$ and glue the ordered sequence of dissections $\beta(v)$ together in a counter-clockwise way at at their respective root-vertices. The root-edge of the first dissection in the sequence becomes the root-edge of the resulting map $S_v$. We then proceed recursively to form the outerplanar maps corresponding to the enriched fringe subtrees dangling from the root $v$ in $(T, \beta)$ and glue their root-vertices (that is, the origin of the root-edge) to the corresponding vertices of $S_v$.

We may define the weight of a sequence of dissections as the product of their individual weights, and its size as the number of non-$*$-vertices.
It follows from general enumerative principles that the sum of weights of ordered sequences of face-weighted dissections with size $k$ is given by $\omega_k = [z^k] \phi_\cO(z)$. We may interpret $\boldsymbol{\omega} = (\omega_k)_{k \ge 0}$ as a branching weight sequence. By \cite[Lem. 6.1]{2016arXiv161202580S} the random face-weighted outerplanar map $\mO_n^\omega$ corresponds to a simply generated tree with branching weights $\boldsymbol{\omega}$ decorated by ordered sequences of dissections:

\begin{lemma}[Outerplanar maps and simply generated trees]
	\label{le:couplingo1}
	The outerplanar map corresponding to the  following random enriched tree $(\cT_n^\cO, \beta_n^\cO)$ is distributed like~$\mO_n^\omega$.
	\begin{enumerate}
		\item Let $\cT_n^\cO$ be an $n$-vertex simply generated tree with weight-sequence $\boldsymbol{\omega}$. 
		\item For each vertex $v$ of $\cT_n^\cO$ let $\beta_n^\cO(v)$ be drawn with probability proportional to its weight among all $d^+_{\cT_n^\cO}(v)$-sized ordered sequences of dissections.
	\end{enumerate}
\end{lemma}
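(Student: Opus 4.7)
The plan is to verify a weight-preserving bijection between $n$-vertex face-weighted outerplanar maps (with a distinguished root edge having the outer face on its right) and pairs $(T,\beta)$ consisting of an $n$-vertex plane tree $T$ together with, for every $v\in T$, an ordered sequence $\beta(v)$ of $d_T^+(v)$ face-weighted dissections, where the weight of the pair is $\prod_{v\in T}\gamma(\beta(v)) = \prod_{v\in T}\prod_{D\in \beta(v)}\gamma(D)$. Once this is done, the lemma is a direct consequence of how simply generated trees are defined.

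The bijection itself is the classical block decomposition of outerplanar maps, realized on the tree side by the construction depicted in Figure~\ref{fi:outervertex}. Given $(T,\beta)$, one proceeds recursively starting from the root $v$ of $T$: glue the dissections in $\beta(v)$ together at their root-vertices in counter-clockwise order to form the ``star'' $S_v$ of $v$, declare the root-edge of the first dissection to be the root-edge of $S_v$, and then for each child $u$ of $v$ in $T$ attach recursively the map corresponding to the enriched fringe subtree at $u$ by identifying its root-vertex with the non-$*$-placeholder vertex of $S_v$ corresponding to $u$. Injectivity and surjectivity follow from the fact that cutvertices separate any outerplanar map into its $2$-connected blocks (which are dissections), and the cyclic order of blocks around each vertex, together with the child-vertex correspondence in each block, determines the plane structure of the tree. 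A careful but routine inductive bookkeeping shows that the vertices of $T$ are in bijection with the vertices of the resulting map, and in particular that $|T|=n$ if and only if the associated map has $n$ vertices.

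Weight preservation is immediate because the inner faces of the outerplanar map are precisely the inner faces of the dissections that appear in the decorations, and none are created or destroyed by the gluing at cutvertices. Hence $\omega(\text{map}) = \prod_{v\in T}\gamma(\beta(v))$. Summing over all $k$-sized ordered sequences of face-weighted dissections amounts to extracting the coefficient $[z^k]$ from the generating series of the class $\Seq\circ\cD^\gamma$, so that the total weight equals $[z^k]\,1/(1-\cD^\gamma(z)) = [z^k]\phi_\cO(z) = \omega_k$, matching the definition of the branching weight sequence $\boldsymbol{\omega}$.

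Finally, for any enriched pair $(T,\beta)$ with $|T|=n$, the above weight identity gives
\[
\Prb{(\cT_n^\cO,\beta_n^\cO)=(T,\beta)} \;=\; \frac{1}{Z_n}\prod_{v\in T}\omega_{d_T^+(v)}\cdot \prod_{v\in T}\frac{\gamma(\beta(v))}{\omega_{d_T^+(v)}} \;=\; \frac{1}{Z_n}\prod_{v\in T}\gamma(\beta(v)),
\]
where $Z_n$ is the partition function of $n$-vertex $\boldsymbol{\omega}$-weighted simply generated trees. This shows that $(\cT_n^\cO,\beta_n^\cO)$ is sampled proportionally to the $\omega$-weight of the associated outerplanar map, which by the bijection is exactly the distribution of $\mO_n^\omega$. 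The only mildly delicate point is tracking the convention about $*$-placeholder vertices consistently through the decomposition so that sizes add up correctly; once this is set up the rest is purely combinatorial accounting analogous to the proof of Lemma~\ref{le:dcoup}.
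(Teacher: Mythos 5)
Your argument is correct and is essentially the argument the paper relies on: the paper simply invokes the general enriched-tree sampling lemma \cite[Lem. 6.1]{2016arXiv161202580S} applied to the block decomposition of Subsection~\ref{ss:optrees}, and your proposal spells out exactly that verification (weight-preserving bijection, $\omega_k = [z^k]\phi_\cO(z)$, and the cancellation $\omega_{d_T^+(v)}\cdot \gamma(\beta(v))/\omega_{d_T^+(v)}$). No gaps.
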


We emphasize that drawing an ordered sequence of dissections of a fixed size with probability proportional to its weight is an example of a Gibbs partition~\cite{MR2245368}.

As an alternative to the encoding of outerplanar maps in terms of decorated trees where the vertices of the map correspond to the vertices of the tree, there is an encoding in terms of trees enriched with dissections (this time counting the origin of the root-edge as a regular vertex) such that the vertices of the map correspond bijectively to the leaves of the tree. As the size index is given by the number of leaves, these enriched trees are also called Schr\"oder enriched parenthesizations.

\begin{figure}[t]
	\centering
	\begin{minipage}{1.0\textwidth}
		\centering
		\includegraphics[width=0.7\textwidth]{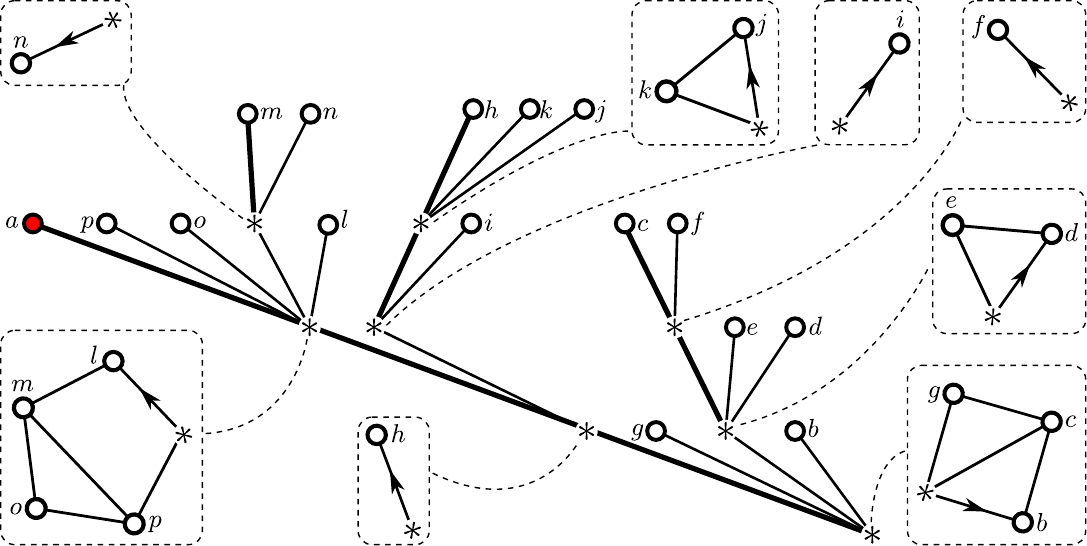}
		\caption{The enriched Schr\"oder parenthesizations representation of the outerplanar map displayed in Figure~\ref{fi:outervertex}. }
		\label{fi:outerleaf}
	\end{minipage}
\end{figure}  
 
The idea of this decomposition is that any outerplanar map is either a single vertex or an edge-rooted dissection of a polygon, where each vertex (including this time the origin of the root-edge) gets identified with the origin of the root-edge of another outerplanar map. The tree corresponding to an outerplanar  map $O$ is formed by starting with some root-vertex and adding offspring according to the number of vertices (including the origin of the root-edge) of the unique block $D$ (that is, maximal dissection) containing the root-edge of $O$. We may then proceed in this manner for the outerplanar maps attached to the boundary of $D$ in let's say clockwise order starting at the root-vertex, resulting in a plane tree $T$.  Note that the tree $T$ contains information on the sizes and locations of the dissection components of the map $O$, but not on the chords within the dissections. For this reason, we assign to each vertex $v \in T$ the  dissection $\delta(v)$ whose vertices correspond to the offspring vertex of $v$. Thus the outerplanar map $O$ may be reconstructed from $(T, \delta)$. See Figure~\ref{fi:outerleaf} for an illustration of the enriched parenthesization corresponding to the map we studied in Figure~\ref{fi:outervertex}.

We may use this decomposition to construct a coupling with a simply generated tree whose atoms are leaves. That is, the tree with ``size'' $n$ gets drawn with probability proportional to the product of weights assigned to its outdegrees among all plane trees with $n$ leaves. By \cite[Lem. 6.7]{2016arXiv161202580S} we may make use of the following sampling procedure.

\begin{lemma}[Outerplanar maps and simply generated trees whose atoms are leaves]
	\label{le:outleaf}
	The outerplanar map corresponding of the following enriched tree $(\tau_n^\cO, \delta_n^\cO)$ is distributed like~$\mO_n^\omega$.
	\begin{enumerate}
		\item Let $\tau_n^\cO$ be an $n$-leaf simply generated tree where each inner vertex with outdegree $d$ receives the weight $p_d := [z^{d-1}]\cD^\gamma(z)$ and each leaf receives weight $p_0 := 1$. 
		\item For each vertex $v$ of $\tau_n^\cO$ draw a dissection $\delta_n^\cO(v)$  of an $n$-gon with probability proportional to its weight. 
	\end{enumerate}
\end{lemma}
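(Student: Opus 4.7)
The plan is to exhibit the bijection, described informally in the paragraph preceding the lemma and illustrated in Figure~\ref{fi:outerleaf}, between face-weighted outerplanar maps with $n$ vertices and enriched plane trees $(T, \delta)$ in which $T$ has $n$ leaves, every inner vertex $v$ has outdegree equal to the number of vertices of the associated block, and $\delta(v)$ is a dissection of a $d^+_T(v)$-gon. Given such a map $O$, either $O$ is a single vertex (and the tree is a single leaf), or the unique block $B$ containing the root-edge is read off, the offspring of the tree root are put in bijection with the vertices of $B$ in clockwise order starting at the origin of the root-edge, and one recurses on the outerplanar maps attached at each of those vertices. This construction is inverse to the gluing procedure that reconstructs the map from the enriched tree, and it therefore defines a bijection that identifies the vertex set of the map with the leaf set of the tree.

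The next step is the weight factorisation. Since every inner face of $O$ belongs to exactly one block and the $\gamma$-weight of a block equals the product of the $\iota$-weights of its inner faces,
\[
\omega(O) = \prod_{\substack{v \in T \\ \text{inner}}} \gamma(\delta(v)).
\]
Summing this over all dissections of a $d^+_T(v)$-gon yields $\sum_{\delta(v)} \gamma(\delta(v)) = [z^{d^+_T(v)-1}]\cD^\gamma(z) = p_{d^+_T(v)}$, which justifies the choice of branching weights in the simply generated tree.

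It then remains to compute the joint distribution of $(\tau_n^\cO, \delta_n^\cO)$. For any admissible pair $(T,\delta)$,
\[
\Prb{(\tau_n^\cO, \delta_n^\cO) = (T,\delta)} \propto \Bigl(\prod_{v \text{ inner}} p_{d^+_T(v)}\Bigr) \prod_{v \text{ inner}} \frac{\gamma(\delta(v))}{p_{d^+_T(v)}} = \prod_{v \text{ inner}} \gamma(\delta(v)) = \omega(O),
\]
which coincides, up to the overall normalising constant, with the Boltzmann law of $\mO_n^\omega$ once transported through the bijection. The main technical point is to verify the bijection carefully: that the block containing the root-edge and the outerplanar maps attached along its boundary are uniquely determined, and that distinct enriched trees yield distinct outerplanar maps. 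This is a standard instance of the tree-like encoding of block-stable classes by Schr\"oder enriched parenthesizations, and it is precisely what is carried out in \cite[Lem. 6.7]{2016arXiv161202580S}.
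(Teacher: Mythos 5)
Your argument is correct and matches the paper's treatment: the paper proves this lemma only by describing the block decomposition into a Schr\"oder enriched parenthesization and citing \cite[Lem. 6.7]{2016arXiv161202580S}, and your bijection-plus-weight-factorisation computation (with $p_d = [z^{d-1}]\cD^\gamma(z)$ being the total $\gamma$-weight of dissections of a $d$-gon, consistent with the size convention that excludes the root-edge origin) is exactly the standard argument underlying that reference. No gaps.
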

If $\nu_\cO >0$ then the tree $\tau_n^\cO$ is distributed like a Galton--Watson tree conditioned on having $n$ leaves, with offspring distribution having probability generating function $1 - \cD^\gamma(\tau_\cO) + z \cD^\gamma(z \tau_\cO)$.

 This follows by similar tilting and normalizing arguments (see \cite[Sec. 6.4]{2016arXiv161202580S} for some details) as for the standard case of simply generated trees with vertices as atoms. Essentially, for any $t>0$ with $\sum_{d \ge 1} p_d t^{d-1}  < 1$ there is a unique $a>0$ such that the tilted sequence $p_0(t) := a$ and $p_k(t) := p_kt^{k-1}$ is a probability weight sequence. The two sequences are equivalent in the sense that the $n$-leaf simply generated trees corresponding to them are identically distributed for all $n$. Hence $\tau_n^\cO$ is distributed like a Galton--Watson tree with branching law $(p_k(t))_{k \ge 0}$ conditioned on having $n$ leaves. The expected value of the tilted sequence is given by $\mu_t = \sum_{k \ge 1} k p_k t^{k-1}$. The parameter $t=\tau_\cO$ is an admissible choice of parameter, and the corresponding mean value $\mu_{\tau_\cO}$  satisfies $\mu_{\tau_\cO} = 1$ if and only if $\nu_\cO \ge 1$.


\section{Proofs in the $\alpha$-stable loop-tree regime}

\label{sec:palpha}

\subsection{Proof of Lemma~\ref{le:regime}}

We prove the two claims  separately.

\begin{proof}[First claim]
	By Equation~\eqref{eq:cases}, we know that $\nu_\cO = 1$ is equivalent to
	\begin{align}
	\label{eq:cond1}
	\frac{\tau_\cD}{(1 - \tau_\cD)(1 - \nu_\cD)} = 1,\quad 0 < \nu_\cD < 1,\quad 0<\rho_{\phi_\cD}<1.
	\end{align}

	Suppose that \eqref{eq:cond1} holds. In particular this entails $0 < \nu_\cD < 1$, and hence $\rho_{\phi_\cD} = \tau_\cD$. As $\phi_\cD(z) = (1 - s(z))^{-1}$ it follows that $0 < r = \tau_\cD <1$ and $0<s(r) < 1$. Hence
	\[
	\nu_\cD = \frac{r s'(r)}{1 - s(r)}.
	\]
	Inserting this expression for $\nu_\cD$ into the first equation of \eqref{eq:cond1} yields
	\[
	\frac{r}{1-r} + \frac{r s'(r)}{1 - s(r)} = 1,
	\]
	verifying one direction of the proof.
	
	As for the other direction, suppose that 
	\begin{align}
	\label{eq:cond2}
	\frac{r}{1-r} + \frac{r s'(r)}{1 - s(r)} = 1, \quad s(r) < 1, \quad r < 1.
	\end{align}
	Then $\rho_{\phi_{\cD}} = r$ and
	\[
	\nu_\cD = \frac{r s'(r)}{1 - s(r)} < 1.
	\]
	Thus $\tau_\cD = \rho_{\phi_\cD} =r$. The first equation of \eqref{eq:cond2} now reads
	\[
	\frac{\tau_\cD}{1 - \tau_\cD} + \nu_\cD = 1.
	\]
	This completes the proof.
\end{proof}

\begin{proof}[Second claim]
	As $\nu_\cO=1$, it holds that $\tau_\cO = \rho_{\phi_\cO}$. Since $\phi_\cO(z) = (1 - \cD^\gamma(z))^{-1}$, it follows that 
	\begin{align}
	\label{eq:sp}
	\tau_\cO = \rho_\cD  < \infty.
	\end{align} Thus, $\sigma_\cO^2 = \tau_\cO \psi'_\cO(\tau_\cO)$ is infinite if and only if $\psi'_\cO(\rho_\cD) = \infty$.
	
	We know that
	\[
	\psi_\cO(z) = z (\cD^\gamma)'(z) / (1 - \cD^\gamma(z))
	\]
	and hence
	\[
	\psi_\cO'(z) = \frac{ ((\cD^\gamma)'(z) + z(\cD^\gamma)''(z))(1 - \cD^\gamma(z)) + z (\cD^\gamma)'(z)^2}{(1 - \cD^\gamma(z))^2}
	\]
	Note that $\cD^\gamma(\rho_\cD) = \tau_\cD = r < 1$ since $\nu_\cO = 1$. Hence $\psi_\cO'(\rho_\cD)$ is infinite if and only if $(\cD^\gamma)'(\rho_\cD) = \infty$ or $(\cD^\gamma)''(\rho_\cD) =  \infty$.
	
	The recursive equation $\cD^\gamma(z) = z \phi_\cD(\cD^\gamma(z))$ implies
	\[
	(\cD^\gamma)'(z) = \frac{\phi_\cD(\cD^\gamma(z))}{1 - z \phi_\cD'(\cD^\gamma(z))}.
	\]
	The denominator is finite at $z = \rho_\cD$, as \[
	\rho_\cD \phi'_\cD(\cD^\gamma(\rho_\cD)) = \tau_\cD \phi'_\cD(\tau_\cD) / \phi_\cD(\tau_\cD) = \nu_\cD < 1.
	\]
	Hence 
	\[
	(\cD^\gamma)'(\rho_\cD) = \frac{\phi_\cD(\tau_\cD)}{1 - \nu_\cD} < \infty.
	\]
	
	Thus we have established that $\sigma_\cO = \infty$ if and only if $(\cD^\gamma)''(\rho_\cD) = \infty$. Differentiating the recursive equation $\cD^\gamma(z) = z \phi_\cD(\cD^\gamma(z))$ twice yields
	\[
	(\cD^\gamma)''(z) = 2 \phi_\cD'(\cD^\gamma(z))(\cD^\gamma)'(z) + z\phi_\cD''(\cD^\gamma(z))(\cD^\gamma)'(z)^2 + z\phi_\cD'(\cD^\gamma(z))(\cD^\gamma)''(z)
	\]
	and hence
	\[
	(\cD^\gamma)''(\rho_\cD) = 2 \phi_\cD'(\tau_\cD)(\cD^\gamma)'(\rho_\cD) + \rho_\cD\phi_\cD''(\tau_\cD)(\cD^\gamma)'(\tau_\cD)^2 + \rho_\cD\phi_\cD'(\tau_\cD)(\cD^\gamma)''(\rho_\cD).
	\]
	We know that $\rho_\cD \phi'_\cD(\tau_\cD) = \nu_\cD < 1$, $\phi'_\cD(\tau_\cD) < \infty$ and $(\cD^\gamma)'(\tau_\cD)< \infty$. Thus $(\cD^\gamma)''(\rho_\cD) = \infty$ if and only if $\phi''_\cD(\tau_\cD) = \infty$. It holds that
	\[
	\phi''_\cD(z) = \frac{s''(z)(1-s(z))^2 + 2 s'(z)(1-s(z)) }{(1 - s(z))^4}.
	\]
	Thus $\phi_\cD''(\tau_\cD) = \infty$ if and only if $s''(r)= \infty$. This concludes the proof.
\end{proof}

\subsection{Proof of Theorem~\ref{TEMAIN}}

\emph{Throughout the rest of Section~\ref{sec:palpha} we assume that our face-weights $\iota=(\iota_k)_k$ satisfy the requirements of Theorem~\ref{TEMAIN}.}

Our aim is to show that the random $\iota$-face-weighted outerplanar map $\mO_n^\omega$ converges towards a loop-tree after proper rescaling. For each outerplanar map $O$, we may construct the map $\bar{O}$ obtained by deleting all inner edges from $O$. That is, we remove all edges that do not lie on the frontier of the outer face.

The core of our arguments is the observation that $\mO_n^\omega$ is close to a rescaled version of $\bar{\mO}_n^\omega$.

\begin{lemma}
	\label{le:main1}
	It holds that 
	\[
		d_{\textsc{GH}}\left( (\bar{\mO}_n^\omega, (1-\nu_\cD)d_{\bar{\mO}_n^\omega}), (\mO_n^\omega, d_{\mO_n^\omega}) \right) = o_p((L_n n)^{1/\alpha}).
	\]
\end{lemma}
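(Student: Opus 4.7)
The plan is to use the identity correspondence $R = \{(u,u) : u \in V(\mO_n^\omega)\}$---both metric spaces share the same vertex set since $\mO_n^\omega$ is outerplanar---and to control its distortion
\[
\disto(R) = \sup_{u,v}\bigl| d_{\mO_n^\omega}(u,v) - (1-\nu_\cD)d_{\bar{\mO}_n^\omega}(u,v) \bigr|
\]
by decomposing along the block tree of $\mO_n^\omega$. The map is a tree of blocks (maximal $2$-connected components), each a weighted dissection, glued along cut-vertices. For any pair $u,v$, both the geodesic in $\mO_n^\omega$ and a shortest boundary path in $\bar{\mO}_n^\omega$ visit the same sequence of blocks $B_1,\ldots,B_\ell$ with the same entry/exit cut-vertex pairs $(x_i,y_i)$, whence
\[
d_{\mO_n^\omega}(u,v) - (1-\nu_\cD) d_{\bar{\mO}_n^\omega}(u,v) = \sum_{i=1}^{\ell} \bigl( d_{B_i}(x_i,y_i) - (1-\nu_\cD) d_{\bar{B_i}}(x_i,y_i) \bigr),
\]
and it suffices to show this sum is $o_p((nL_n)^{1/\alpha})$ uniformly in $(u,v)$.

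The key within-block estimate exploits that $\nu_\cD<1$ by Lemma~\ref{le:regime}: the Ehrenborg--M\'endez Gibbs partition describing each block lies in the condensation regime, so a typical large block $B$ of size $n_B$ has a unique giant face $F$ of degree approximately $(1-\nu_\cD)n_B$, with all other chord-restricted subdissections comparatively small. Letting $M_B$ denote the largest such subdissection, geodesics in $B$ follow $F$'s boundary cycle up to $O(M_B)$ entry/exit corrections, while the polygon boundary $\bar B$ additionally wraps around each small subdissection hanging off $F$. Using that the polygon-edge density along $F$'s cycle equals $1-\nu_\cD$ on average, with fluctuations dominated by the traversed subdissections, yields the uniform bound
\[
\bigl| d_B(x,y) - (1-\nu_\cD)d_{\bar B}(x,y) \bigr| \le C(M_B + 1), \qquad x,y\in V(B),
\]
for a universal constant~$C$.

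The final step sums these estimates along a geodesic. Under the assumption $\nu_\cO = 1$ and \eqref{eq:weight}, the offspring law of the simply generated tree $\cT_n^\cO$ (Lemma~\ref{le:couplingo1}) lies in the domain of attraction of an $\alpha$-stable law, giving (i) a block-tree-distance bound $\ell = O_p(n^{1-1/\alpha})$, (ii) the map-geodesic bound $d_{\mO_n^\omega}(u,v) = O_p(n^{1/\alpha})$, hence $\sum_i n_{B_i} = O_p(n^{1/\alpha})$ since large blocks contribute linearly to the geodesic length, and (iii) the condensation tail estimate $M_B = O_p(n_B^{1/\alpha} L(n_B)^{1/\alpha})$ in the $\alpha$-stable condensation regime~\cite[Sec.~19]{MR2908619}. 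A power-mean inequality then gives
\[
\sum_{i=1}^\ell (M_{B_i}+1) \le C\,\ell^{1-1/\alpha}\Bigl(\sum_{i=1}^\ell n_{B_i}\Bigr)^{1/\alpha} = O_p\bigl(n^{(1-1/\alpha)^2 + 1/\alpha^2}\bigr),
\]
which is $o_p(n^{1/\alpha})$ for $1<\alpha<2$ since $\alpha + 2/\alpha < 3$ in that range.

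The main obstacle is to turn the heuristic within-block estimate $\Delta_B \le C(M_B + 1)$ into a genuinely uniform deterministic-or-high-probability bound in $(x,y)\in V(B)$: this requires carefully controlling the heavy-tailed fluctuations of the polygon-edge density along $F$'s cycle, where the largest traversed subdissection typically dominates. Combining this with the two-scale summation---pooling the small $O(1)$ contributions from the many microscopic blocks with the larger contributions from the handful of macroscopic blocks---while respecting the slowly varying correction $L_n$ is the most delicate part of the argument.
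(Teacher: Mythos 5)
Your overall architecture---the identity correspondence on the shared vertex set, distortion bounded by a sum of per-block penalties along paths in the block tree, each block approximated by a rescaled circle with the factor $1-\nu_\cD$---is exactly the paper's. But two steps have genuine gaps. First, the within-block estimate $|d_B(x,y)-(1-\nu_\cD)d_{\bar B}(x,y)|\le C(M_B+1)$ is false as a deterministic bound: the discrepancy between $k$ steps along the giant face and the corresponding boundary length is the deviation of a partial sum of $k$ heavy-tailed subdissection sizes from $k/(1-\nu_\cD)$, and such a deviation is only \emph{typically} of the order of the largest summand (one-big-jump principle), not pointwise dominated by it. The correct statement is probabilistic, $f(\mD_m^\gamma)=O_p(\tilde L_m m^{1/\alpha})$, which is the paper's Lemma~\ref{le:diss}, proved via Kortchemski's functional limit theorem for the partial sums of fringe-subtree sizes in the condensation regime. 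You flag this yourself as ``the main obstacle,'' but it is not a technicality that can be deferred.

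Second, and more seriously, your aggregation step sums per-block $O_p$ bounds over $\ell\to\infty$ blocks and then takes a supremum over all pairs $(u,v)$; neither operation is legitimate for bare $O_p$ statements without a uniform (moment or maximal-inequality) version, and your input $\sum_i n_{B_i}=O_p(n^{1/\alpha})$ is obtained circularly from the map-geodesic bound that Lemma~\ref{le:main1} is needed to establish (the statement is true, but must come from the looptree distance via the Lukasiewicz path, not from $d_{\mO_n^\omega}$; also, blocks do \emph{not} contribute linearly to a geodesic, since a geodesic may enter and exit a huge block at nearby vertices of its giant face). The paper avoids both problems with a small/large dichotomy: for blocks of outdegree at most $\delta b_n$ it uses only the \emph{trivial} bound $f(D)\le 2|D|$ and proves in Lemma~\ref{le:core}---the technical heart, exploiting the jump structure of the $\alpha$-stable excursion---that the total size of such small blocks along any ancestral line is uniformly $o_p(b_n)$ once $\delta$ is small; for blocks of outdegree at least $\delta b_n$ there are only $O_p(1)$ many, each of size $O_p(b_n)$, so the per-block estimate $f=o_p(\text{size})$ from Lemma~\ref{le:diss} can be applied one block at a time. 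Your H\"older/power-mean computation is arithmetically consistent (indeed $(1-1/\alpha)^2+1/\alpha^2<1/\alpha$ for $1<\alpha<2$), but it cannot substitute for this uniformity argument.
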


We are going to combine this with a scaling limit for the map $\bar{\mO}_n^\omega$:

\begin{lemma}
	\label{le:approx1}
	It holds that
	\[
	(\bar{\mO}_n^\omega, b_n^{-1} d_{\bar{\mO}_n^\omega}) \convdis (\mathscr{L}_\alpha, d_{\mathscr{L}_\alpha})
	\]
	for
	\[
		b_n = \left( \frac{nL_n\Gamma(-\alpha)}{1-s(r) }  \right)^{1/\alpha} \frac{1-r}{r}
	\]
\end{lemma}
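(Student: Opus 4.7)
The plan is to reduce the statement to the scaling limit of Curien and Kortchemski~\cite[Thm.~4.1]{MR3286462} for discrete looptrees of stable Galton--Watson trees. I first invoke the Schr\"oder-type encoding from Lemma~\ref{le:outleaf}: since $\nu_\cO = 1 > 0$, the map $\mO_n^\omega$ is distributed as a critical Galton--Watson tree $\tau_n^\cO$ with offspring law $\xi$ of probability generating function
\[
f(z) = 1 - \cD^\gamma(\tau_\cO) + z\,\cD^\gamma(\tau_\cO z),
\]
conditioned on having exactly $n$ leaves, decorated with conditionally independent dissections $\delta_n^\cO$ at each vertex. The central observation is that $\bar{\mO}_n^\omega$ is obtained from the discrete looptree $\mathscr{L}(\tau_n^\cO)$ by identifying each internal vertex $v$ of $\tau_n^\cO$ with its leftmost offspring: both represent the same vertex of the map (the cutvertex linking the block rooted at $v$'s parent with the block associated to $v$'s offspring). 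Each such identification contracts an edge of length $1$ in the looptree, so the Gromov--Hausdorff distortion of the natural correspondence between $\bar{\mO}_n^\omega$ and $\mathscr{L}(\tau_n^\cO)$ is bounded by the tree-height of $\tau_n^\cO$. For $\alpha$-stable Galton--Watson trees with $1 < \alpha < 2$ this height is of order $n^{(\alpha-1)/\alpha}$, which is $o(n^{1/\alpha}) = o(b_n)$, so the identification is negligible after rescaling by $b_n^{-1}$.

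Next, I verify that $\xi$ lies in the domain of attraction of a spectrally positive $\alpha$-stable law. Criticality $\ndE[\xi] = 1$ follows immediately from $\nu_\cO = 1$ together with the relations $\tau_\cO = \rho_\cD$ and $\cD^\gamma(\tau_\cO) = r$ established in the proof of Lemma~\ref{le:regime}. The tail of $\xi$ is governed by $P(\xi = k) = \tau_\cO^{k-1}\,[z^{k-1}]\cD^\gamma(z)$, so the analysis reduces to studying the singularity of $\cD^\gamma(z)$ at $\rho_\cD$. The functional equation $z = \cD^\gamma(z)(1 - s(\cD^\gamma(z)))$ combined with the Karamata-type equivalence
\[
s(r) - s(u) \sim \frac{L_{1/(r-u)}}{|\Gamma(1-\alpha)|}\,(r-u)^\alpha \qquad (u \nearrow r),
\]
which is the standard Tauberian translation of the tail assumption $\iota_{k+2} \sim L_k k^{-\alpha-1}r^{-k}$, then yields a singular expansion of $\cD^\gamma$ near $\rho_\cD$ whose leading non-analytic term is of the form $\mathsf{c}\,(\rho_\cD - z)^\alpha L_{1/(\rho_\cD - z)}$ for an explicit $\mathsf{c} = \mathsf{c}(r, s(r), \alpha)$. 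The Flajolet--Odlyzko transfer theorem converts this into a tail estimate $P(\xi \ge k) \sim \mathsf{c}'\, L_k\, k^{-\alpha}$ with an explicit $\mathsf{c}'$.

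With the stable offspring tail in hand, I apply \cite[Thm.~4.1]{MR3286462}: the rescaled looptree $a_n^{-1}\mathscr{L}(\tau_n^\cO)$ converges in the Gromov--Hausdorff sense to $(\mathscr{L}_\alpha, d_{\mathscr{L}_\alpha})$ for the appropriate normalizing sequence $a_n$ determined by the tail of $\xi$. Their theorem is stated for conditioning on the number of vertices; since $P(\xi = 0) = 1 - \cD^\gamma(\tau_\cO) \in (0,1)$ is a fixed strictly positive constant and the fluctuations of the leaf count around its law-of-large-numbers value are negligible on the scale of $b_n$, a standard absolute-continuity argument transfers the limit to conditioning on $n$ leaves up to a slowly varying correction. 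Combining this with the $o(b_n)$-close identification $\bar{\mO}_n^\omega \approx \mathscr{L}(\tau_n^\cO)$ then yields the claimed convergence.

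The main obstacle is the bookkeeping of multiplicative constants: one must carefully propagate all factors from the Karamata equivalence for $s(r) - s(u)$ through the singular expansion of $\cD^\gamma$, the Flajolet--Odlyzko transfer, the definition of $a_n$, Curien--Kortchemski's explicit normalization of the looptree scaling limit, and the leaves-versus-vertices conversion, so that the final factor lines up precisely with $b_n = (nL_n \Gamma(-\alpha)/(1-s(r)))^{1/\alpha}\,(1-r)/r$ as stated.
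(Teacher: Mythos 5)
Your overall strategy is the paper's: encode $\mO_n^\omega$ via the leaf-indexed tree $\tau_n^\cO$ of Lemma~\ref{le:outleaf}, show that $\bar{\mO}_n^\omega$ and the discrete looptree $\mathscr{L}(\tau_n^\cO)$ are within Gromov--Hausdorff distance $O(\He(\tau_n^\cO)) = o_p(b_n)$ of one another, verify that the offspring law $\xi$ is critical with an $\alpha$-stable tail, and feed this into \cite[Thm.~4.1]{MR3286462}. Your description of the correspondence (collapsing each internal vertex of $\tau_n^\cO$ onto its leftmost offspring) is an equivalent reformulation of the paper's intermediate object $\tilde{\mO}$, obtained there by expanding each cutvertex into a line graph, and it yields the same height bound. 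Where you diverge is in how the tail of $\xi$ is obtained: you propose singularity analysis of the functional equation for $\cD^\gamma$ near $\rho_\cD$ followed by a transfer theorem, whereas the paper argues probabilistically, first getting the tail of the branching weights of the subcritical tree $\cT^\cD$ from the composition-scheme asymptotics \cite[Thm.~4.8, 4.30]{MR3097424} and then the tail of $|\cT^\cD|$ (which is the tail of $\xi$) from Kortchemski's condensation estimate \cite[Eq.~(14)]{MR3335012}. Both routes are viable; yours requires a transfer theorem valid for singular exponents with slowly varying factors, which exists but needs care, and in either case the constant chase you defer is the real content of matching the stated $b_n$.

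The one step in your write-up that would not survive scrutiny as written is the passage from vertex-conditioning to leaf-conditioning. Conditioning a Galton--Watson tree on having exactly $n$ leaves is a singular conditioning; it cannot be compared to conditioning on the number of vertices by ``a standard absolute-continuity argument'' plus the remark that the leaf count concentrates, because the event being conditioned on has vanishing probability and the two conditionings are mutually singular for every $n$. Making this transfer rigorous requires either a local limit theorem for the leaf count or a reworking of the excursion/Vervaat argument for the leaf-counting functional of the Lukasiewicz path — which is precisely the content of Kortchemski's paper on trees conditioned by their number of leaves. The paper avoids the issue entirely by citing \cite[Thm.~5.9, Thm.~6.1, Rem.~5.10]{MR2946438}, which gives the Lukasiewicz path invariance principle and the height estimate $\He(\tau_n^\cO) = o_p(b_n)$ directly in the leaf-conditioned setting, with the explicit normalization that, after Karamata's theorem, produces the constant in $b_n$. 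Replacing your transfer step with that citation closes the gap and brings your argument into line with the paper's.
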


Theorem~\ref{TEMAIN} then readily follows from these two Lemmas, recalling that $1 - \nu_\cD = r/(1-r)$ by Lemma~\ref{le:regime}.

\subsection{Proving Lemma~\ref{le:main1}}

First, we observe that for the weight-sequences under consideration we have explicit expressions for the coefficients of the generating series associated to dissections which by Lemma~\ref{le:outleaf} correspond to the branching weights $(p_n)_{n\geq 0}$ of $\tau_n^\cO$. The expression along with the relation to Galton--Watson processes is given in the following Lemma.
\begin{lemma}
	\label{le:asymptotics}
	The coefficients of $\phi_\cD(z)$ satisfy the asymptotic relation
	\begin {align*}
	[z^k]\phi_\cD(z) \sim \frac{L_k}{(1-s(r))^2} k^{-\alpha-1} r^{-k}
	\end {align*}
	as $k$ becomes large. 
	The tree  $\tau_n^\cO$ is distributed like a critical Galton--Watson tree conditioned on having $n$ leaves. Its offspring distribution $\xi$ satisfies $\Pr{\xi=0} = 1-r$ and
	\[
		\Pr{\xi=k}  \sim \frac{L_k(1-r)^{\alpha+1}}{(1-s(r))r^\alpha} k^{-\alpha -1}.
	\]
\end{lemma}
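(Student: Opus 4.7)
The proof splits into two parts. The distributional identification of $\tau_n^\cO$ as a critical Galton--Watson tree conditioned on $n$ leaves is immediate from the discussion following Lemma~\ref{le:outleaf}: under $\nu_\cO = 1$, the tilting parameter $t = \tau_\cO$ is admissible and produces a probabilistic offspring law of mean one with probability generating function $G(z) = 1 - \cD^\gamma(\tau_\cO) + z\cD^\gamma(z\tau_\cO)$. Combining $\tau_\cO = \rho_\cD$ from \eqref{eq:sp} with $\cD^\gamma(\rho_\cD) = \tau_\cD = r$ yields $\Pr{\xi=0} = 1-r$ and, for $k \geq 1$, $\Pr{\xi = k} = \rho_\cD^{k-1}\,[z^{k-1}]\cD^\gamma(z)$. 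The first and third assertions therefore reduce to asymptotic expansions of the coefficients of two explicit generating series near their dominant singularities.

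The plan is to apply Flajolet--Sedgewick singularity analysis, starting from the Tauberian expansion of $s$ at $r$ and propagating first to $\phi_\cD$ and then to $\cD^\gamma$. Since $[z^k]s(rz) = L_k k^{-\alpha-1}$ is regularly varying of index $-\alpha-1$, and $\alpha > 1$ makes both $s(r) = \sum L_k k^{-\alpha-1}$ and $s'(r) = r^{-1}\sum L_k k^{-\alpha}$ finite, the standard expansion gives
\[
s(r) - s(z) - s'(r)(r-z) \sim -\frac{\Gamma(-\alpha)}{r^\alpha}\, L\!\left(\frac{r}{r-z}\right)(r-z)^\alpha
\]
as $z \nearrow r$. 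Substituting into $\phi_\cD(z) = (1-s(z))^{-1}$ and expanding the quotient (no poles appear since $1-s(r) > 0$ by Lemma~\ref{le:regime}) yields
\[
\phi_\cD(z) - \phi_\cD(r) - \phi_\cD'(r)(z-r) \sim \frac{\Gamma(-\alpha)}{(1-s(r))^2\, r^\alpha}\, L\!\left(\frac{r}{r-z}\right)(r-z)^\alpha,
\]
because analytic remainders of order $(r-z)^2$ are negligible against $(r-z)^\alpha$ when $\alpha < 2$. The transfer theorem then produces the first claim.

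For $\cD^\gamma$, the strategy is to invert the implicit relation $z = w/\phi_\cD(w)$ at the subcritical singularity $(\rho_\cD, r)$; the condition $\nu_\cD < 1$ forces $\cD^\gamma$ to reach the boundary of analyticity of $\phi_\cD$ with strictly positive derivative $1/((1-\nu_\cD)(1-s(r)))$, so that the singular type of $\cD^\gamma$ is inherited directly from $\phi_\cD$ and not from a square-root branch. Setting $u = r - w$, substituting the singular expansion of $\phi_\cD$ into the expansion of $\rho_\cD - z$ in $u$, and inverting to first singular order yields
\[
\cD^\gamma(z) = r - \frac{\rho_\cD - z}{(1-\nu_\cD)(1-s(r))} + \frac{\Gamma(-\alpha)\, L(1/(\rho_\cD - z))\, (\rho_\cD - z)^\alpha}{r^{\alpha-1}\,[(1-\nu_\cD)(1-s(r))]^{\alpha+1}} + \cdots.
\]
A final invocation of the transfer theorem, together with $\rho_\cD = r(1-s(r))$ and the identity $1-\nu_\cD = r/(1-r)$ from Lemma~\ref{le:regime}, collapses the prefactor in $\Pr{\xi=k} = \rho_\cD^{k-1}[z^{k-1}]\cD^\gamma(z)$ to the advertised $(1-r)^{\alpha+1}/((1-s(r))\,r^\alpha)$ after a short rearrangement.

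The main hurdle is purely bookkeeping: one must carry all multiplicative constants and the slowly varying function $L$ faithfully through the chain $s \mapsto \phi_\cD \mapsto \cD^\gamma \mapsto G$, relying on the invariance of $L$ under composition with analytic rescalings and under asymptotic multiplication by constants. The only genuinely analytic ingredient is the Tauberian expansion for $s$ in the range $1 < \alpha < 2$; everything else reduces to algebraic manipulation and standard transfer.
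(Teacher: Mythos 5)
Your identification of $\tau_n^\cO$ as a critical Galton--Watson tree conditioned on its number of leaves, and the computation $\Pr{\xi=0}=1-\cD^\gamma(\rho_\cD)=1-r$, $\Pr{\xi=k}=\rho_\cD^{k-1}[z^{k-1}]\cD^\gamma(z)$, match the paper. The analytic core, however, has a genuine gap. You propose to run Flajolet--Odlyzko singularity analysis: write a singular expansion of $s$ at $z=r$ with a $(r-z)^\alpha$ term, propagate it through $\phi_\cD=(1-s)^{-1}$ and through the inverse of $z=w/\phi_\cD(w)$, and then invoke ``the transfer theorem'' twice. Transfer theorems require analytic continuation of the functions involved to a $\Delta$-domain beyond the disc of convergence. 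The only hypothesis here is on the \emph{coefficients} of $s$, namely $[z^k]s(z)=L_kk^{-\alpha-1}r^{-k}$ with $L_k$ an arbitrary slowly varying sequence; nothing guarantees that $s$ (hence $\phi_\cD$ or $\cD^\gamma$) continues analytically past $|z|=r$, and the circle may well be a natural boundary. The Abelian direction (coefficients $\Rightarrow$ behaviour as $z\nearrow r$ along the reals) is fine by Karamata, but going back from the singular expansion of $\phi_\cD$ or $\cD^\gamma$ to coefficient asymptotics would require either $\Delta$-analyticity or a Tauberian side condition such as eventual monotonicity of the coefficients, neither of which is available. So as written the argument only covers special weight sequences (e.g.\ $L_k$ constant, where $s$ is a polylogarithm), not the stated generality.

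The paper circumvents this with purely real-variable tools. For the first claim it applies the theory of (locally) subexponential sequences \cite[Thm.~4.8, 4.30]{MR3097424}: since $[z^k]s(z)$ is regularly varying and $s(r)<1$, one gets $[z^k]g(s(z))\sim g'(s(r))\,[z^k]s(z)$ for $g(u)=(1-u)^{-1}$ directly from convolution estimates, with no analyticity beyond the coefficient hypothesis. For the coefficients of $\cD^\gamma$ it identifies $\rho_\cD^{k}[z^{k}]\cD^\gamma(z)/r$ with $\Pr{|\cT^\cD|=k}$ for a subcritical Galton--Watson tree with heavy-tailed offspring law and invokes Kortchemski's local estimate \cite[Eq.~(14)]{MR3335012}, which rests on the Otter--Dwass formula and one-big-jump asymptotics rather than on complex analysis. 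If you want to keep your route you must either add the hypothesis of $\Delta$-continuability or replace both invocations of the transfer theorem by such real-variable arguments; your bookkeeping of the constants (in particular the collapse of the prefactor to $(1-r)^{\alpha+1}/((1-s(r))r^\alpha)$ via $1-\nu_\cD=r/(1-r)$) is otherwise consistent with the paper's.
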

\begin{proof}
	We know by Lemma~\ref{le:regime} that $s(r)<1$. This allows us to apply \cite[Thm. 4.8, 4.30]{MR3097424} to deduce that the coefficients of $\phi_\cD(z)$ are asymptotically equal to the coefficients of $s(z)$ up to a constant factor, namely
	\begin{align*}
	[z^k]\phi_\cD(z) \sim \frac{1}{(1 - s(r))^2} L_k k^{-\alpha -1}r^{-k}.
	\end{align*}
	 Lemma~\ref{le:regime} also tells us that $\tau_\cD = r$. Hence 
	the tree $\cT^\cD_n$ from Lemma~\ref{le:dcoup} is distributed like a Galton--Watson tree $\cT^\cD$ conditioned on having  $n$ vertices, with offspring distribution $\xi(\cT^\cD)$ satisfying
	\begin{align}
		\label{eq:mref}
	\Pr{\xi(\cT^\cD) = k} = \frac{[z^k] \phi_\cD(rz)}{\phi_\cD(r)} \sim \frac{1}{1 - s(r)} L_k k^{-\alpha -1}.
	\end{align}
	By \cite[Eq. (14)]{MR3335012} this implies
	\begin{align*}
	\Pr{|\cT^\cD| = k} \sim \frac{1}{1 - s(r)} L_k \left( (1-\nu_\cD)k\right)^{-\alpha -1}
	\end{align*}
	with $1 - \nu_\cD = r/(1-r)$.
	On the other hand,
	$\cD^\gamma(\rho_\cD) = \tau_\cD$ implies
	\begin{align*}
	\Ex{z^{|\cT^\cD|}} = \frac{ \cD^\gamma(\rho_\cD z)}{ \cD^\gamma(\rho_\cD)} = \frac{\cD^\gamma(r(1-s(r)) z)}{r}.
	\end{align*}
	Recall that by Equation~\eqref{eq:sp} it holds that $\tau_\cO = \rho_\cD$. Hence the tree $\tau_n^\cO$ from Lemma~\ref{le:outleaf} is distributed like a Galton--Watson tree  conditioned on having $n$ leaves, with offspring distribution $\xi$ satisfying
	\begin{align*}
	\Pr{\xi = 0} = 1 - \cD^\gamma(\rho_\cD) = 1- r
	\end{align*}
	and
	\begin{align*}
	\Pr{\xi = k} &= \rho_\cD^{k-1} [z^{k-1}]\cD^\gamma(z) = r \Pr{|\cT^\cD| = k-1} \notag \\ &\sim \frac{r}{1 - s(r)} L_k \left( (1-\nu_\cD)k\right)^{-\beta} \notag \\
	&= \frac{L_k(1-r)^{\alpha+1}}{(1-s(r))r^\alpha} k^{-\alpha -1}.
	\end{align*}
	The offspring distribution is critical, since
	\[
	1 = \nu_\cO = \frac{\rho_\cD (\cD^\gamma)'(\rho_\cD)}{1 - \cD^\gamma(\rho_\cD)}
	\]
	implies that
	\[
		\Ex{\xi} = \cD^\gamma(\rho_\cD) + \rho_\cD (\cD^\gamma)'(\rho_\cD) = 1 + (1 - \cD^\gamma(\rho_\cD)) = 1.
	\]
\end{proof}

Any dissection $D$ shares the same set of vertices as the circle $\bar{D}$, and hence there is a canonical correspondence between the two. We define the penalty function $f(D)$ as the distortion of this correspondence if we rescale $\bar{D}$ by the factor $1- \nu_\cD$. That is,
\[
f(D) = \max_{v,v' \in D} \left| (1-\nu_\cD) d_{\bar{D}}(v,v') - d_D(v,v') \right|.
\]
Note that there is a trivial upper bound
\begin{align}
	\label{eq:trivial}
	f(D) \le 2 |D|.
\end{align}

We observe that $\iota$-face-weighted dissections converge in the Gromov--Hausdorff sense towards a deterministic circle. The idea is that $\mD_n^\gamma$ has a giant face of size roughly $(1-\nu_\cD)n$ and the sizes of the dissections attached to its boundary behave asymptotically in an i.i.d. manner.

\begin{figure}[t]
	\centering
	\begin{minipage}{1.0\textwidth}
		\centering
		\includegraphics[width=0.32\textwidth]{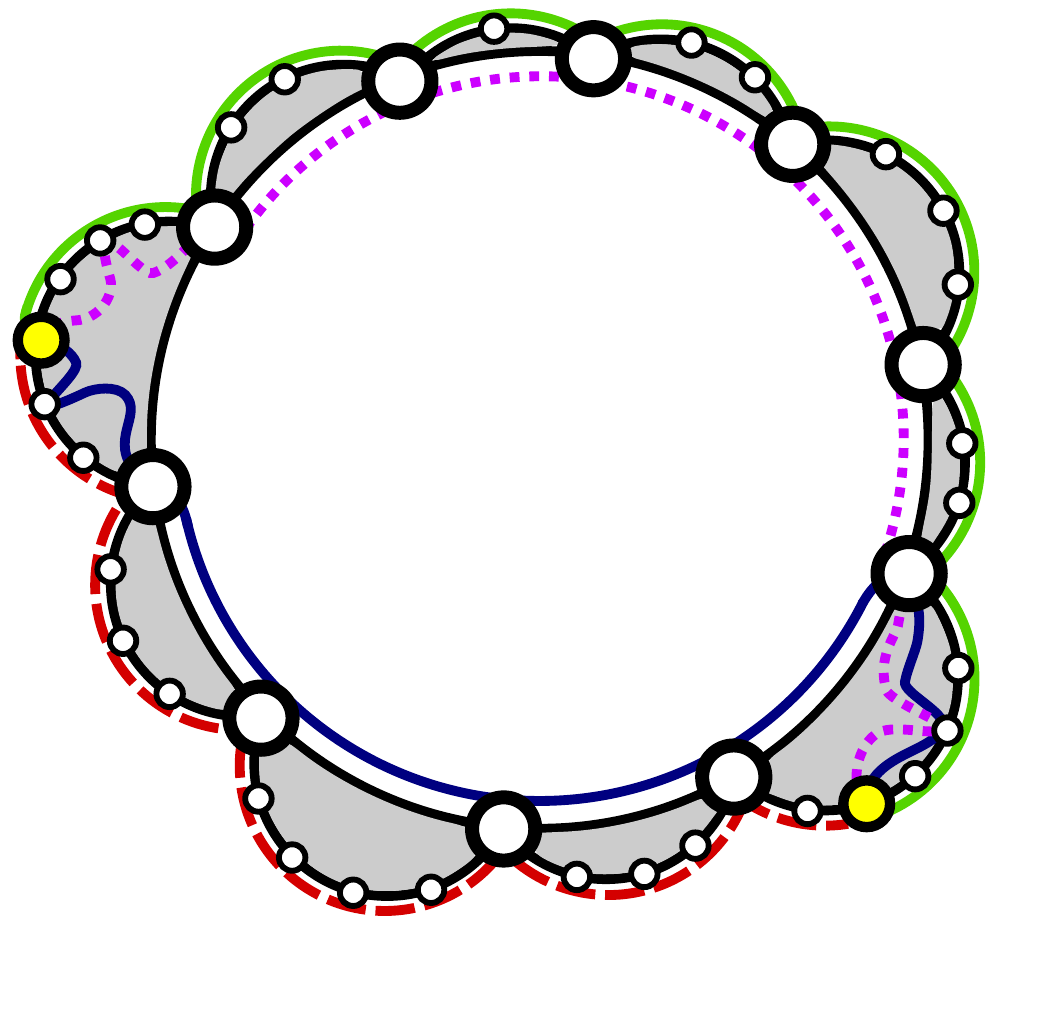}
		\caption{A dissection with a giant face that is depicted as a white disc. The shaded regions represent smaller dissections attached to its frontier. The possible geodesics between the two marked vertices are coloured in solid blue and dotted magenta. The candidates for the corresponding geodesics on the boundary are coloured in dashed red and solid green.}
		\label{fi:diss}
	\end{minipage}
\end{figure}  

\begin{lemma}
	\label{le:diss}
	There is a slowly varying sequence $\tilde{L}_n$ such that $f(\mD_n^\gamma) = O_p( \tilde{L}_n n^{1/\alpha})$. 
\end{lemma}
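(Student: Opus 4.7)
The plan is to use the Ehrenborg--M\'endez tree encoding of Lemma~\ref{le:dcoup} to show that $\mD_n^\gamma$ features a giant face $F^*$ whose boundary is surrounded only by small sub-dissections, and then to check that $D$-distances agree, up to an $O_p(n^{1/\alpha}\tilde L_n)$ error, with cyclic distances on $\partial F^*$, which in turn approximate $(1-\nu_\cD)d_{\bar D}$.

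By Lemma~\ref{le:asymptotics}, $\cT_n^\cD$ is a Galton--Watson tree of size $n$ with subcritical offspring distribution (mean $\nu_\cD<1$) whose tail is regularly varying of index $\alpha+1$. The standard condensation theorems for such trees (Jonsson--Stefansson~\cite{MR2764126}, Janson~\cite{MR2908619}, and the sharper estimates of Kortchemski~\cite{MR3335012}) produce, with probability $1-o(1)$, a unique vertex $v^*\in\cT_n^\cD$ of out-degree
\[
N^* = (1-\nu_\cD)n + O_p(n^{1/\alpha}\tilde L_n)
\]
for some slowly varying $\tilde L_n$, while the subtrees hanging off the remaining vertices decouple asymptotically into independent copies of the unconditioned $\cT^\cD$. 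The decoration $C^*:=\beta_n^\cD(v^*)$ is a chord-restricted dissection of size $N^*$, i.e.\ a sequence of $m$ faces of sizes $j_1,\ldots,j_m$ totalling $N^*$; after tilting by $r$ this becomes a Gibbs partition with weights $L_j j^{-\alpha-1}$ summing to $s(r)<1$, which is subcritical. Standard results for such Gibbs partitions (\cite{MR2245368}, see also \cite{MR2908619}) then supply a unique giant face $F^*$ of degree $d^*=j^*+2$ with $N^*-j^*=O_p(1)$, while the remaining faces of $C^*$ contribute $O_p(1)$ in total size. The $N^*$ non-$*$-vertices of $C^*$ carry attached sub-dissections that are asymptotically iid with the law of $\cT^\cD$, whose size distribution is regularly varying of tail index $\alpha$, so the maximum sub-dissection size is $O_p(n^{1/\alpha}\tilde L_n)$.

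Next I would compare the two metrics. For outer-polygon vertices $u,v$ of $D=\mD_n^\gamma$, let $\pi(u),\pi(v)$ denote the nearest $\partial F^*$-vertices along the outer polygon, and let $\sigma_u,\sigma_v$ be the sub-dissections containing them. The triangle-inequality upper bound $d_D(u,v)\le d_{\sigma_u}(u,\pi(u))+d_{\partial F^*}(\pi(u),\pi(v))+d_{\sigma_v}(\pi(v),v)$, together with the matching topological lower bound---using that $F^*$ is an inner face of the planar map $D$, so $\partial F^*$ is a simple cycle separating the plane and each sub-dissection touches $F^*$ only at a pair of vertices; any $u$--$v$ path must therefore leave $\sigma_u$, traverse at least $d_{\partial F^*}(\pi(u),\pi(v))-O(1)$ edges on $\partial F^*$ between entry and exit points, and enter $\sigma_v$---yields
\[
|d_D(u,v)-d_{\partial F^*}(\pi(u),\pi(v))| = O_p(n^{1/\alpha}\tilde L_n)
\]
uniformly in $u,v$. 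To convert $d_{\partial F^*}$ into $(1-\nu_\cD)d_{\bar D}$, enumerate the outer polygon cyclically as $x_0,\ldots,x_n$ and let $M(t):=\#\{i<t:x_i\in\partial F^*\}$. The increments of $M$ are the arc lengths $A_1,A_2,\ldots$ between consecutive $\partial F^*$-vertices; by the previous step these are asymptotically iid with mean $(1-\nu_\cD)^{-1}$ and tail of index $\alpha$. A functional invariance principle for such partial sums in the domain of attraction of an $\alpha$-stable law (cf.~\cite{MR3185928}) gives $\sup_{0\le t\le n+1}|M(t)-(1-\nu_\cD)t|=O_p(n^{1/\alpha}\tilde L_n)$, whence $d_{\partial F^*}(\pi(u),\pi(v)) = (1-\nu_\cD)d_{\bar D}(u,v)+O_p(n^{1/\alpha}\tilde L_n)$ uniformly, and the claimed distortion bound follows upon combining.

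The hard part will be verifying the uniform-in-$u,v$ lower bound on $d_D(u,v)$: one must rule out topological shortcuts through the small faces of $C^*$, through the two chords of $F^*$ incident to $v^*$, or through other sub-dissections, beyond what is absorbed into the $O_p(n^{1/\alpha}\tilde L_n)$ error. Planarity of $D$ together with the no-internal-chord-in-a-face property of dissections should close this gap, but the argument requires care to ensure that the cumulative shortcut contribution is no larger than the maximum sub-dissection diameter. A secondary technical step is lifting the stable limit for $M(t)$ from a pointwise to a uniform-in-$t$ statement, which is however standard once the functional invariance principle is in hand.
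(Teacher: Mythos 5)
Your proposal follows essentially the same route as the paper: condensation in the simply generated tree $\cT_n^\cD$ to produce a giant vertex, a Gibbs-partition argument to extract a giant face from its decoration with the attached sub-dissections of maximal size $O_p(\tilde L_n n^{1/\alpha})$, projection of vertices onto the boundary of that face, and Kortchemski's functional limit for the partial sums of fringe-subtree sizes to convert the cyclic distance on the giant face into $(1-\nu_\cD)d_{\bar D}$. The "hard part" you flag (the topological lower bound ruling out shortcuts) is in fact immediate from planarity — each piece attached across a chord of the giant face meets its boundary cycle only at the two endpoints of that chord, so no detour can beat the cycle distance — and the paper accordingly treats it as self-evident.
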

\begin{proof}
	Recall that $\mD_n^\gamma$ corresponds via a combinatorial bijection to the enriched tree $(\cT_n^\cD, \beta_n^\cD)$, where we assign to each vertex $v \in \cT_n^\cD$ a dissection $\beta_n^\cD(v)$ whose chords are required to be incident to the destination of the root edge. Let $u^* \in \cT_n^\cD$ denote the lexicographically first vertex with maximal outdegree. Let $A_n$ denote the tree obtained by cutting away all descendants of the vertex $u^*$ and let $(T_n^i)_{1 \le i \le d^+_{\cT_n^\cD}(u^*)}$ denote the ordered family of fringe subtrees dangling from $u^*$. 
	
	By Equation~\eqref{eq:mref} we know that the tree $\cT_n^\cD$ is distributed like a Galton--Watson tree $\cT^\cD$ conditioned on having size $n$, with offspring distribution $\xi(\cT^\cD)$ following a power law up to a slowly varying factor and satisfying $\Ex{\xi(\cT^\cD)} = \nu_\cD < 1$. By~\cite[Thm. 20.1]{MR2908619} it follows that the pruned tree $A_n$ converges in the local weak sense towards an almost surely finite tree, and in particular the size of $A_n$ is stochastically bounded. As for the fringe subtrees, Kortchemski~\cite{{MR3335012}} observed in the more general context of simply generated trees in the condensation regime  that the vector $(T_n^i)_{1 \le i \le d^+_{\cT_n^\cD}(u^*)}$ may be approximated by a vector of independent copies of $\cT^\cD$. 
	His results~\cite[Thm. 1, Cor. 1]{MR3335012} imply that there is a slowly varying sequence $(\tilde{L}_n)_{n \ge 1}$ such that
	\begin{align}
	\label{eq:x1}
	d^+_{\cT_n^\cD}(u^*) = (1 - \nu_\cD)n + O_p(\tilde{L}_n n^{1/\alpha}) 
	\end{align}
	and
	\begin{align}
	\label{eq:x2}
	\max\{ |T_n^i| \mid 1 \le i \le d^+_{\cT_n^\cD}(u^*) \} = O_p( \tilde{L}_n n^{1/\alpha}).
	\end{align}
	Summing up, we have obtained that the dissection $\mD_n^\gamma$ consists of a giant chord restricted dissection with small dissections of maximal size $O_p(\tilde{L}_n n^{1/\alpha})$ attached to its boundary. 
	Moreover, by a general Gibbs partition result~\cite[Thm. 3.1]{Mreplaceme} it follows that the largest face in the chord restricted dissection $\beta_n^\cD(u^*)$ has size $d^+_{\cT_n^\cD}(u^*) + O_p(1)$. In other words, $\mD_n^\gamma$ has a giant face $F$ of size $(1 - \nu_\cD + o_p(1))n$, and the dissections attached to the boundary of this face have maximum size $O_p( \tilde{L}_n n^{1/\alpha})$. Compare with Figure~\ref{fi:diss}. Any point of the dissection that does not already lie on the boundary of $F$ is contained in a unique dissection attached to the boundary of $F$. Thus, for any vertex $v$ of $\mD_n^\gamma$ there are at most two vertices on the boundary of $F$ whose distance is minimal from $v$. We pick any of the at most two and call it $v_F$. By Equation~\eqref{eq:x2} we obtain
	\[
		\sup_{v \in \mD_n^\gamma} d_{\mD_n^\gamma}(v, v_F) \le \max\{ |T_n^i| \mid 1 \le i \le d^+_{\cT_n^\cD}(u^*) \} = O_p( \tilde{L}_n n^{1/\alpha}).
	\]
	In particular, if $d_F$ denotes the metric on the circle $F$, we obtain that
	\begin{align}
		\label{eq:x3}
		\sup_{u,v \in \mD_n^\gamma} |d_{\mD_n^\gamma}(u,v) - d_{F}(u_F,v_F)| = O_p( \tilde{L}_n n^{1/\alpha}).
	\end{align}

	In order to bound $f(\mD_n^\gamma)$, we have to compare the distance $d_{\mD_n^\gamma}(u,v)$ with the scaled distance $(1- \nu_\cD)d_{\bar{\mD}_n^\gamma}(u,v)$ on the boundary. The strategy is that if we need to pass through $k$ edges in order to travel from $u_F$ to $v_F$ on the boundary of $F$ in clockwise order, then we have to pass through roughly $k/(1- \nu_\cD)$ edges in order to travel from $u$ to $v$ in clockwise order on the boundary  $\bar{\mD}_n^\gamma$. And the idea behind this thought is that the  dissections encountered along the way (except the one corresponding to $A_n$, and the one containing the small faces of $\beta_n^\cD(u^*)$)  behave like i.i.d. copies of a dissection of a polygon with circumference $|\cT^\cD|+1$, and it holds that $\Ex{|\cT^\cD|} = 1/(1 - \nu_\cD)$ as $\Ex{\xi^\cD} = \nu_\cD< 1$.

	Let us make this explicit. It was shown in~\cite[Thm. 3]{MR3335012}  that the process $Z_k := |T_n^1| + \ldots + |T_n^k|$ admits a scaling limit
	\[
		\left( \frac{Z_{\lfloor t d^+_{\cT_n^\cD}(u^*) \rfloor} - t d^+_{\cT_n^\cD}(u^*)/(1 - \nu_\cD) }{ \tilde{L}_n n^{1/\alpha} } \right) \convdis (Y_t)_{0 \le t \le 1}
	\]
	in the Skorokhod space $\mathbb{D}([0,1], \ndR)$. Hence
	\begin{align}
		\label{eq:yo}
		\max_{1 \le k \le d^+_{\cT_n^\cD}(u^*)} | Z_k - k /(1 - \nu_\cD)| = O_p( \tilde{L}_n n^{1/\alpha}).
	\end{align}
	We may write
	\[
		d_F(u_F, v_F) = \min\left(d^\ell_F(u_F, v_F), d^r_F(u_F, v_F) \right).
	\]
	with $d^\ell(u_F, v_F)$ and $d^r(u_F, v_F)$ denoting the number of edges required to traverse from $u_F$  to $v_F$ on the boundary of $F$ in a clockwise and counter-clockwise manner, respectively. Likewise, we may write
	\[
		d_{\bar{\mD}_n^\gamma}(u,v) = \min\left(d_{\bar{\mD}_n^\gamma}^\ell(u, v), d^r_{\bar{\mD}_n^\gamma}(u, v) \right)
	\]
	with $d^\ell_{\bar{\mD}_n^\gamma}(u,v)$ and $d^r_{\bar{\mD}_n^\gamma}(u,v)$ denoting the number of edges required to traverse from $u$ to $v$ on the circle $\bar{\mD}_n^\gamma$ in a clockwise or counter-clockwise manner. It follows from Equation~\eqref{eq:yo} that
	\[
		\sup_{u,v \in \mD_n^\gamma} | d_{\bar{\mD}_n^\gamma}^\ell(u, v) - d^\ell_F(u_F, v_F)/(1- \nu_\cD)| = O_p( \tilde{L}_n n^{1/\alpha})
	\]
	and likewise for $d_{\bar{\mD}_n^\gamma}^r$ and $d_F^r$. Consequently, it holds that
	\[
	\sup_{u,v \in \mD_n^\gamma} | d_{\bar{\mD}_n^\gamma}(u, v) - d_F(u_F, v_F)/(1- \nu_\cD)| = O_p( \tilde{L}_n n^{1/\alpha}).
	\]
	By Equation~\eqref{eq:x3} this means that
	\[
		f(\mD_n^\gamma) = \sup_{u,v \in \mD_n^\gamma} |(1- \nu_\cD) d_{\bar{\mD}_n^\gamma}(u, v) -  d_{{\mD}_n^\gamma}(u, v)| = O_p( \tilde{L}_n n^{1/\alpha}).
	\]
\end{proof}

\begin{remark}
\label{eq:rem}
Let $\mu_n$ denote the uniform distribution on the vertices of $\mD_n^\gamma$, and $\mu$ the uniform distribution on the circle $C^1$. Lemma~\ref{le:diss} implies that
\[
\left(\mD_n^\gamma, \frac{1}{ n(1-\nu_\cD)} d_{\mD_n^\gamma}, \mu_n \right) \convdis (C^1, d_{C^1}, \mu)
\]
in the Gromov--Hausdorff--Prokhorov sense.
\end{remark}

We denote the height of a plane tree $T$ by  $\He(T)$. Recall that the Lukasiewicz path $W(T) = (W_k(T))_{1 \le k \le |T|}$ of  $T$ is defined by ordering its vertices in depth-first-search order $v_1, \ldots, v_{|T|}$ and setting
\[
	W_k(T) = \sum_{i=1}^{k-1}(d_T^+(v_i) -1), \quad 1 \le k \le |T|.
\]
The depth-first-search always tries to proceed to the left-most offspring of the current vertex. We may also consider the reverse depth-first-search list $\hat{v}_1, \ldots, \hat{v}_{|T|}$ of vertices of $T$, where in each step we try to proceed with the right-most offspring instead. We define the mirrored Lukasiewicz path $\hat{W}(T) = (\hat{W}_k(T))_{1 \le k \le |T|}$ by
\[
\hat{W}_k(T) = \sum_{i=1}^{k-1} \left(d^+_{T}(\hat{v}_i) - 1\right)
\]
We call a random plane tree $\mT$ mirror invariant, if $W(\mT) \eqdist \hat{W}(\mT)$. This is equivalent to stating that the \emph{distribution} of $\mT$ does not change if we reverse the ordering in each offspring set.

We let  $X^{\mathrm{exc},(\alpha)} = ( X^{\mathrm{exc},(\alpha)}_t)_{0 \le t \le 1}$ denote the normalized excursion of the $\alpha$-stable spectrally positive L\'evy process. We refer to Bertoin's book~\cite{MR1406564} for background information on L\'evy processes, and to \cite{MR3286462} for details on this particular process. This process lives on the Skorokhod space $D([0,1], \ndR)$ of real valued functions that are  c\`adl\`ag, that is, they are continuous from the right and have left-side limits. See \cite[Ch. VI]{MR959133}  for details on this classical space of functions.

Lemma~\ref{le:diss} ensures that "large" dissections in $\mO_n^\omega$ asymptotically look like circles. The following result will aid us in showing that "small" dissections do not contribute to the asymptotic geometric shape. Its proof is based on the proof of \cite[Thm. 4.1]{MR3286462}.

\begin{lemma}
	\label{le:core}
	 Let $(\mT_n)_{n \ge 1}$ denote a sequence of  mirror invariant random finite plane trees. Suppose that there exists a sequence $B_n$ of positive real numbers such that the Lukasiewicz path $(W_k^n)_{1 \le k \le |\mT_n|} := W_k(\mT_n)$ corresponding to $\mT_n$ satisfies
	\begin{align}
		\label{eq:conv}
		\left( \frac{1}{B_n} W_{ \lfloor t |\mT_n|\rfloor }^n \right)_{0 \le t \le 1} \convdis  X^{\mathrm{exc},(\alpha)}
	\end{align}
	in the Skorokhod space $D([0,1], \ndR)$. Suppose that additionally 
	\begin{align}
	\He(\mT_n) = o_p(B_n).
	\end{align}
	Then for any $\epsilon>0$ there is a $\delta >0$ such that for all large enough $n$
	\begin{align}
		\label{eq:AA}
		\Prb{ \max_{v \in \mT_n} \sum_{u \text{ ancestor of } v} d^+_{\mT_n}(u) \one_{d^+_{\mT_n}(v) \le \delta B_n}  \ge \epsilon B_n} < \epsilon.
	\end{align}
\end{lemma}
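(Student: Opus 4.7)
First, following the strategy in the proof of \cite[Thm.\ 4.1]{MR3286462}, I would reduce the sum of ancestor degrees to a jump-sum of the Lukasiewicz path. For $v = v_k$ at position $k$ in DFS order with ancestors at positions $i_0 = 1 < i_1 < \ldots < i_{h-1}$, the identity $d^+_{\mT_n}(v_i) - 1 = \Delta W_i^n$ yields
\[
\sum_{u \text{ ancestor of } v} d^+_{\mT_n}(u) \one_{d^+_{\mT_n}(u) \le \delta B_n} \le \He(\mT_n) + \sum_{j=0}^{h-1} \Delta W_{i_j}^n \one_{\Delta W_{i_j}^n \le \delta B_n}.
\]
Since $\He(\mT_n) = o_p(B_n)$ by hypothesis, it suffices to bound the second sum uniformly in $v$. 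The ancestors are characterized by $W_{i_j}^n = \min_{i_j \le m \le k} W_m^n$, which expresses the sum as a deterministic functional of $(W_j^n)_j$.

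Next, I would pass to the continuous limit. For a c\`adl\`ag $f:[0,1] \to \ndR$ with $f(0) = 0$, define
\[
\Psi_\delta(f) = \sup_{t \in [0,1]} \sum_{s \le t,\ f(s-) = \inf_{[s,t]} f} \Delta f(s) \one_{0 < \Delta f(s) \le \delta}.
\]
Mirror invariance of $\mT_n$ allows a symmetric left/right treatment of ancestor selection, so that after rescaling the discrete quantity is bounded (up to the $\He$ term) by $B_n \Psi_\delta(W^n/B_n)$. Since $X^{\mathrm{exc},(\alpha)}$ almost surely has no jump of size exactly $\delta$, $\Psi_\delta$ is continuous at $X^{\mathrm{exc},(\alpha)}$ in the Skorokhod topology, so the convergence hypothesis \eqref{eq:conv} yields $\Psi_\delta(W^n/B_n) \convdis \Psi_\delta(X^{\mathrm{exc},(\alpha)})$.

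Finally, I would show that $\Psi_\delta(X^{\mathrm{exc},(\alpha)}) \to 0$ in probability as $\delta \to 0$ and combine. For each fixed $t$, the untruncated inner sum coincides with the distance from the root to the corresponding point in the $\alpha$-stable looptree $\mathscr{L}_\alpha$ and is almost surely finite, so the small-jump contribution tends to $0$ by dominated convergence. Uniform control over $t$ exploits the almost-sure finiteness of the number of jumps of $X^{\mathrm{exc},(\alpha)}$ of size $>\delta$, which decomposes $[0,1]$ into finitely many intervals on which the small-jump ancestor sum is controlled via the a.s.\ finite diameter of $\mathscr{L}_\alpha$ and monotonicity in $\delta$. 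Given $\epsilon > 0$, one picks $\delta$ small so that $\Pr{\Psi_\delta(X^{\mathrm{exc},(\alpha)}) \ge \epsilon/3} < \epsilon/3$, and then $n$ large enough that the Skorokhod approximation and the height bound each contribute less than $\epsilon/3$. The main obstacle is this uniform-in-$t$ step, since pointwise smallness of truncated ancestor sums does not directly produce a supremum bound; its resolution relies on the almost-sure finite big-jump skeleton of $X^{\mathrm{exc},(\alpha)}$ combined with the finite diameter of $\mathscr{L}_\alpha$.
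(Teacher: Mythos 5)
Your overall strategy coincides with the paper's: rewrite the truncated ancestor-degree sum as a height term plus truncated jump-sums of the Lukasiewicz path (using mirror invariance for the left/right symmetry), transfer to the limit via \eqref{eq:conv}, and use the almost sure summability of the jump decomposition of $X^{\mathrm{exc},(\alpha)}$ from \cite[Cor.~3.4]{MR3286462} to make the small-jump contribution vanish as $\delta\to 0$. The reduction in your first paragraph is sound.

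There is, however, a genuine gap at the pivot of your argument: the claim that $\Psi_\delta$ is continuous at $X^{\mathrm{exc},(\alpha)}$ in the Skorokhod topology because the process almost surely has no jump of size exactly $\delta$. Ruling out jumps of size exactly $\delta$ only stabilizes the truncation threshold; it does nothing to stabilize the ancestral relation $a\preccurlyeq_f b$ (defined through running infima, and highly sensitive to Skorokhod perturbations precisely for the small jumps you are summing) nor the supremum over $t$. Establishing convergence of exactly this kind of truncated ancestor-sum is the technical heart of the proof of \cite[Thm.~4.1]{MR3286462}, and it is not done there by a continuity argument; correspondingly, the paper avoids any such claim by arguing by contradiction along a subsequence, reducing to a single subsequential weak limit time $t_0$ of $k/|\mT_n|$ and invoking the quantitative estimate in the second display after Equation~(4.8) of \cite{MR3286462}. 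You would need either to actually prove continuity of the supremum functional $\Psi_\delta$ (which I do not believe is straightforward, and which, if easy, would substantially shorten the Curien--Kortchemski proof) or to follow that pointwise-plus-contradiction route. A second, smaller inaccuracy: the untruncated ancestor-jump sum $\sum_{s\preccurlyeq t}\Delta X^{\mathrm{exc},(\alpha)}(s)$ is not the distance to the root in $\mathscr{L}_\alpha$ --- the looptree metric goes the shorter way around each loop --- so neither its finiteness nor the uniform-in-$t$ control can be read off the diameter of $\mathscr{L}_\alpha$. The correct finiteness statement is $X^{\mathrm{exc},(\alpha)}_t=\sum_{s\preccurlyeq t}x_s^t(X^{\mathrm{exc},(\alpha)})$ together with its time-reversed (mirrored) analogue, and upgrading pointwise vanishing as $\delta\to0$ to a vanishing supremum over $t$ requires more than the finiteness of the big-jump skeleton, since $t\mapsto\sum_{s\preccurlyeq t}\Delta X^{\mathrm{exc},(\alpha)}(s)\one_{\Delta X^{\mathrm{exc},(\alpha)}(s)\le\delta}$ is not continuous and Dini-type arguments do not apply directly.
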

\begin{proof}
	Let $v_1, \ldots, v_{|\mT_n|}$ denote the depth-first-search ordered list of vertices of the tree $\mT_n$. For any vertex $v \in \mT_n$ we may consider the indices $1 = i_1 < i_2  < \ldots < i_k$ such that $v_{i_1}, \ldots, v_{i_k}$ is the path from the root $v_1$ to the vertex $v = v_{i_k}$. Of course, the indices depend on $v$, but for the sake of readability we are not going to denote this explicitly. (The inclined reader may imagine an invisible superscript $v$  on each index and on $k$, that is, $i_\ell^v$ instead of $i_\ell$, and $k^v$ instead of $k$.)
	
	Let us call any sibling of a vertex in a plane tree that lies to its right a "right-sibling", and likewise any sibling that lies to its left a "left-sibling". Then $W_{i_k}^n$ counts the number of right-siblings of the ancestors of $v$. Moreover, as $W_1^n = 0$, we may write
	\[
		W_{i_k}^n = \sum_{\ell =2}^k (W_{i_\ell}^n - W_{i_{\ell -1}}^n)
	\]
	Here $W_{i_\ell}^n - W_{i_{\ell -1}}^n$ counts the number of right-siblings of $v_{i_\ell}$. Similarly, if $\hat{v}_1, \ldots, \hat{v}_{|\mT_n|}$ is the reverse depth-first-search ordered list of vertices of $\mT_n$, we may consider the indices $1=j_1 < j_2 < \ldots < j_h$ such  that $\hat{v}_{j_1}, \ldots, \hat{v}_{j_h}$ is the path from the root $\hat{v}_1$ to the vertex $v = \hat{v}_{j_h}$. We may write
\begin{align*}
\sum_{u \text{ ancestor of } v} d_{\mT_n}^+(u) = W_{i_k}^n + \hat{W}_{j_h}^n + \he_{ \mT_n}(v),
\end{align*}
	with $\he_{\mT_n}(v)$ denoting the height of the vertex $v$ in the tree $\mT_n$.  If the degree of an ancestor $u$ of $v$ is at most $\delta B_n$, then $u$ has at most $\delta B_n$ left-siblings and at most $\delta B_n$ right-siblings. This allows us to write for any $\delta>0$
\begin{align*}
\sum_{u \text{ ancestor of } v} d_{\mT_n}^+(u)\one_{d^+_{\mT_n}(v) \le \delta B_n} \le \He(\mT_n) &+  \sum_{\ell =2}^k (W_{i_\ell}^n - W_{i_{\ell -1}}^n)\one_{(W_{i_\ell}^n - W_{i_{\ell -1}}^n) \le \delta B_n  } \\
&+ \sum_{\ell =2}^h (\hat{W}_{j_\ell}^n - \hat{W}_{j_{\ell -1}}^n)\one_{(\hat{W}_{j_\ell}^n - \hat{W}_{j_{\ell -1}}^n) \le \delta B_n  }.
\end{align*}
For any $\epsilon>0$ it holds that if the left-side of this inequality is at least $\epsilon B_n$, then at least one of the three summands on the right-hand side is at least $\epsilon B_n / 3$. Hence
\begin{multline}
\label{eq:tt}
\Prb{ \max_{v \in \mT_n} \sum_{\ell = 1}^{k-1} d^+_{\mT_n}(v_{i_\ell}) \one_{d^+_{\mT_n}(v_{i_\ell}) \le \delta B_n}  \ge \epsilon B_n} \le \Pr{  \He(\mT_n) \ge \epsilon B_n / 3} \\
+ 2 \Prb{  \max_{v \in \mT_n}  \sum_{\ell =2}^k (W_{i_\ell}^n - W_{i_{\ell -1}}^n)\one_{(W_{i_\ell}^n - W_{i_{\ell -1}}^n) \le \delta B_n  } \ge \epsilon B_n / 3 }.
\end{multline}

We also assumed that $\He(\mT_n) = o_p(B_n)$, so the first summand on the right hand side tends to zero as $n$ becomes large. Hence in order to verify Inequality~\eqref{eq:AA}, we need to show that we may choose $\delta$ sufficiently small (depending on $\epsilon$) such that the second summand in \eqref{eq:tt} is smaller than $\epsilon/2$ for sufficiently large $n$.

We are going to prove this by contradiction. Suppose that there is an $\epsilon>0$ such that for each $\delta>0$ it happens for infinitely many $n$ that there is a vertex $v \in \mT_n$ with
\begin{align}
\label{eq:contradiction}
\Prb{ \sum_{\ell =2}^k (W_{i_\ell}^n - W_{i_{\ell -1}}^n)\one_{(W_{i_\ell}^n - W_{i_{\ell -1}}^n) \le \delta B_n / 3 } \ge \epsilon B_n / 3 } > \epsilon/2.
\end{align}
(Recall that the indices $i_1, \ldots, i_k$ and $k$ depend on $v$, but we suppress the super-scripted $v$ in order to improve readability.) It follows that there is a sequence of positive numbers $\delta_n \to 0$ and a subsequence of the natural numbers such that Inequality~\eqref{eq:contradiction} holds for $\delta = \delta_n$ as $n$ tends to infinity along this subsequence. The random variable $k / |\mT_n|$ lies in the unit interval. The space of Borel-probability measures on the compact unit interval is compact in the topology of weak convergence. It follows  that, by passing to another subsequence, we may without loss of generality assume  that there is a random number $t_0 \in [0,1]$ such that $k / |\mT_n| \convdis t_0$.

For all $f \in D([0,1], \ndR)$ and $a,b \in [0,1]$  with $a \le b$ let us write $a \preccurlyeq_f b$ if  \[x_a^b(f) := \inf_{x \in [a,b]} f(x) -  f(a-) \ge 0.\] 
It was shown in \cite[Cor. 3.4]{MR3286462} that  almost surely  for all $b \in [0,1]$	
\[
X^{\mathrm{exc},(\alpha)}_b = \sum_{0 \preccurlyeq_{X^{\mathrm{exc},(\alpha)}} a \preccurlyeq_{X^{\mathrm{exc},(\alpha)}} b} x_a^b(X^{\mathrm{exc},(\alpha)}).
\]
This sum is finite for all $b$ since $X^{\mathrm{exc},(\alpha)}$ is a bounded function, just like any other c\`adl\`ag function on a compact interval. Hence
\begin{align}
\label{eq:t5}
\Prb{ \sum_{0 \preccurlyeq s \preccurlyeq {t_0}} x_s^{t_0}(X^{\mathrm{exc},(\alpha)}) \one_{x_s^{t_0}(X^{\mathrm{exc},(\alpha)}) \le \delta_n}  \ge \epsilon/3} =o(1).
\end{align}
On the other hand, setting $w_n := \left( \frac{1}{B_n} W_{ \lfloor t |\mT_n|\rfloor }^n \right)_{0 \le t \le 1}$ it holds that
\[
B_n^{-1}\sum_{\ell =2}^k (W_{i_\ell}^n - W_{i_{\ell -1}}^n)\one_{(W_{i_\ell}^n - W_{i_{\ell -1}}^n) \le \delta_n B_n } = 	\sum_{0 \preccurlyeq_{w_n} a \preccurlyeq_{w_n} k/|\mT_n|} x_a^{k/|\mT_n|}(w_n) \one_{x_a^{k/|\mT_n|}(w_n) \le \delta_n }.
\]
It follows from this and \eqref{eq:t5}, from the limit~\eqref{eq:conv}, and \cite[second display after Equation~(4.8)]{MR3286462},  that the probability on the left-hand side of Inequality~\eqref{eq:contradiction} tends to zero as $n$ becomes large along the subsequence we fixed. We have thus arrived at the desired contradiction, completing the proof.
\end{proof}

	The limit~\eqref{eq:conv} and properties of the Skorokhod topology entail that the jumps  of the rescaled Lukasiewicz path of $\mT_n$ converge towards the jumps of  $X^{\mathrm{exc},(\alpha)}$. Like any c\`adl\`ag function on a compact interval the excursion $X^{\mathrm{exc},(\alpha)}$ has only finitely many jumps of height at least $\epsilon$. This shows that for any $\epsilon>0$ it holds that
\begin{align}
\label{eq:BB}
|\{v \in \mT_n \mid d^+_{\mT_n}(v) \ge \epsilon B_n\}| = O_p(1).
\end{align}
The distribution of the size of the largest jump (that is, the limit distribution of the rescaled maximum degree $(\Delta(\mT_n)-1) / B_n$) is given in \cite[Formula (19.97)]{MR2908619}. For our purposes, it will be enough to know that the limit \eqref{eq:conv} implies
\begin{align}
\label{eq:CC}
\Delta(\mT_n) = O_p(B_n).
\end{align}
We now have all the ingredients for proving Lemma~\ref{le:main1}. 

\begin{proof}[Proof of Lemma~\ref{le:main1}]
	Setting
	\begin{align*}
	\tilde{B}_n = |\Gamma(1 - \alpha)|^{1/\alpha} \inf\{x \ge 0 \mid \Pr{\xi > x } \le 1/n\}
	\end{align*}
	it follows from Lemma~\ref{le:asymptotics} and  \cite[Thm. 6.1]{MR2946438} that the Lukasiewicz path $(\cW_k(\tau_n^\cO))_{0 \le k \le |\tau_n^\cO|}$ of the tree $\tau_n^\cO$ satisfies
	\begin{align*}
	\left( \frac{1}{\tilde{B}_{|\tau_n^\cO|}} \cW_{\lfloor t |\tau_n^\cO| \rfloor} (\tau_n^\cO )\right)_{0\le t \le 1} \convdis X^{\mathrm{exc}, \alpha}.
	\end{align*}
	By \cite[Rem. 5.10]{MR2946438} the scaling factor $\frac{1}{\tilde{B}_{|\tau_n^\cO|}}$ may be replaced by $\frac{1}{B_n}$ with
	\begin{align*}
	B_n = \frac{\tilde{B}_n}{\Pr{\xi=0}^{1/\alpha}} = \left( \frac{|\Gamma(1 - \alpha)|}{1-r}\right)^{1/\alpha} \inf\{x \ge 0 \mid \Pr{\xi > x } \le 1/n\} .
	\end{align*}
	It follows from Karamata's theorem that 
	\[
	\Pr{\xi>k} \sim \frac{L_k(1-r)^{\alpha+1}}{(1-s(r))r^\alpha \alpha} k^{-\alpha}
	\]
	as $k$ becomes large. Hence
	\[
	\inf\{x \ge 0 \mid \Pr{\xi > x } \le 1/n\} \sim \left( \frac{ L_n(1-r)^{\alpha+1}}{(1-s(r))r^\alpha \alpha} n \right)^{1/\alpha}.
	\]
	Consequently, using $\Gamma(1-\alpha) = -\Gamma(-\alpha) \alpha$,
	\[
	B_n \sim \left( \frac{nL_n\Gamma(-\alpha)}{1-s(r) }  \right)^{1/\alpha} \frac{1-r}{r} = b_n.
	\]
	Summing up, we have that 
	\begin{align}
	\label{eq:first}
	\left( \frac{1}{b_n} \cW_{\lfloor t |\tau_n^\cO| \rfloor} (\tau_n^\cO )\right)_{0\le t \le 1} \convdis X^{\mathrm{exc}, \alpha}.
	\end{align}
	Likewise, \cite[Thm. 5.9, Rem. 5.10]{MR2946438} and $1 < \alpha < 2$ imply that \begin{align}
	\label{eq:second}
	\He(\tau_n^\cO) = o_p(b_n).
	\end{align} Hence in the following we may apply Lemma~\ref{le:core} to the random tree $\tau_n^\cO$.

	For any two vertices $u, v \in \tau_n^\cO$ let  $P(u,v)$ denote the unique path from $u$ to $v$ in the tree $\tau_n^\cO$. 
	The distortion of the canonical correspondence between $\mO_n^\omega$ and the rescaled map $(1-\nu_\cD)\bar{\mO}_n^\omega$ is bounded  by the maximum sum of penalties of the dissections along the  paths in the tree $\tau_n^\cO$.
	Hence 
	\begin{align}
		\label{eq:q1}
		d_{\textsc{GH}}\left( (\bar{\mO}_n^\omega, (1-\nu_\cD)d_{\bar{\mO}_n^\omega}), (\mO_n^\omega, d_{\mO_n^\omega}) \right) \le \max_{u, v \in \tau_n^\cO} \sum_{x \in P(u,v)} f(\delta_n^\cO(x)).
	\end{align}
	The path $P(u,v)$ passes through the youngest common ancestor~$a$ of the vertices $u$ and $v$. If for any vertex $x \in \tau_n^\cO$ we write $S_x$ for the sum of penalties along the path from the root to $x$, then
	\[
		\sum_{x \in P(u,v)} f(\delta_n^\cO(x)) = S_u + S_v - 2 S_a + f(\delta_n^\cO(a)).
	\]
	It follows from Inequality~\eqref{eq:q1} that
	\begin{align}
	\label{eq:t1}
	d_{\textsc{GH}}\left( (\bar{\mO}_n^\omega, (1-\nu_\cD)d_{\bar{\mO}_n^\omega}), (\mO_n^\omega, d_{\mO_n^\omega}) \right) \le 3 \max_{v \in \tau_n^\cO} S_v .
	\end{align}
	Let $\delta$ be an arbitrary positive number. For any vertex $v$ we may write 
	\[
		S_v = \sum_{u \text{ ancestor of } v} f(\delta_n^\cO(u)) = S_v^{< \delta} + S_v^{ \ge \delta}
	\]
	with $S_v^{< \delta}$ denoting the sum $S_v$ restricted to all $u$ with degree smaller than $\delta b_n$, and analogously $S_v^{\ge \delta}$ denoting the sum restricted to all $u$ with degree at least $\delta b_n$. Let $\epsilon > 0$ be given. By Lemma~\ref{le:core} and Inequality~\eqref{eq:trivial} we may choose $\delta$ small enough (depending on~$\epsilon$) such that for sufficiently large $n$
	\begin{align}
		\label{eq:t2}
		\Pr{\max_{v \in \tau_n^\cO} S_v^{< \delta} \ge \epsilon b_n} < \epsilon.
	\end{align}
	By Equation~\eqref{eq:BB} the number of vertices in the tree $\tau_n^\cO$ with outdegree at least $\delta b_n$ is stochastically bounded. Hence there is a positive large number $M$ such that for large enough~$n$
	\[
		\Pr{ | \{ v \in \tau_n^\cO \mid d^+_{\tau_n^\cO}(v) \ge \delta b_n\} | > M } < \epsilon.
	\]
	By Equation~\eqref{eq:CC} we may assume that $M$ is also large enough such that for all $n$
	\[
		\Pr{\Delta(\tau_n^\cO) \ge M b_n} < \epsilon.
	\]
	It follows that for sufficiently large $n$ we may bound the probability $\Pr{\max_{v \in \tau_n^\cO} S_v^{\ge \delta} \ge \epsilon b_n}$ by
	\begin{align*}
		  2\epsilon + \Pr{\max_{v \in \tau_n^\cO} S_v^{\ge \delta} \ge \epsilon b_n, | \{ v \in \tau_n^\cO \mid d^+_{\tau_n^\cO}(v) \ge \delta b_n\} | < M, \Delta(\tau_n^\cO) \le M b_n}.
	\end{align*}
	By Lemma~\ref{le:diss} we know that $f(\mD_k^\gamma) = o_p(k)$ as $k \to \infty$. Hence if there are at most $M$ large  dissections in $\mO_n^\omega$ (with "large" meaning having at least $\delta b_n$ vertices), each having size less than $M b_n$, then the sum of their penalties is $o_p(b_n)$. Hence
	\begin{align}
		\label{eq:t3}
		\Pr{\max_{v \in \tau_n^\cO} S_v^{\ge \delta} \ge \epsilon b_n} \le 3 \epsilon
	\end{align}
	for large enough $n$.	
	As $\epsilon>0$ was arbitrary, Inequalities~\eqref{eq:t1}, \eqref{eq:t2}, and \eqref{eq:t3} imply that
	\[
d_{\textsc{GH}}\left( (\bar{\mO}_n^\omega, (1-\nu_\cD)d_{\bar{\mO}_n^\omega}), (\mO_n^\omega, d_{\mO_n^\omega}) \right) = o_p(b_n) = o_p((L_n n)^{1/\alpha}).
\]
\end{proof}

\subsection{Proving Lemma~\ref{le:approx1}}

The idea for proving Lemma~\ref{le:approx1} is to combine the coupling of Lemma~\ref{le:outleaf} with scaling limit results for random looptrees by Curien and Kortchemski~\cite{MR3286462}. The latter may be applied due to the asymptotic expansions in Lemma~\ref{le:asymptotics}.

\begin{figure}[t]
	\centering
	\begin{minipage}{1.0\textwidth}
		\centering
		\includegraphics[width=0.75\textwidth]{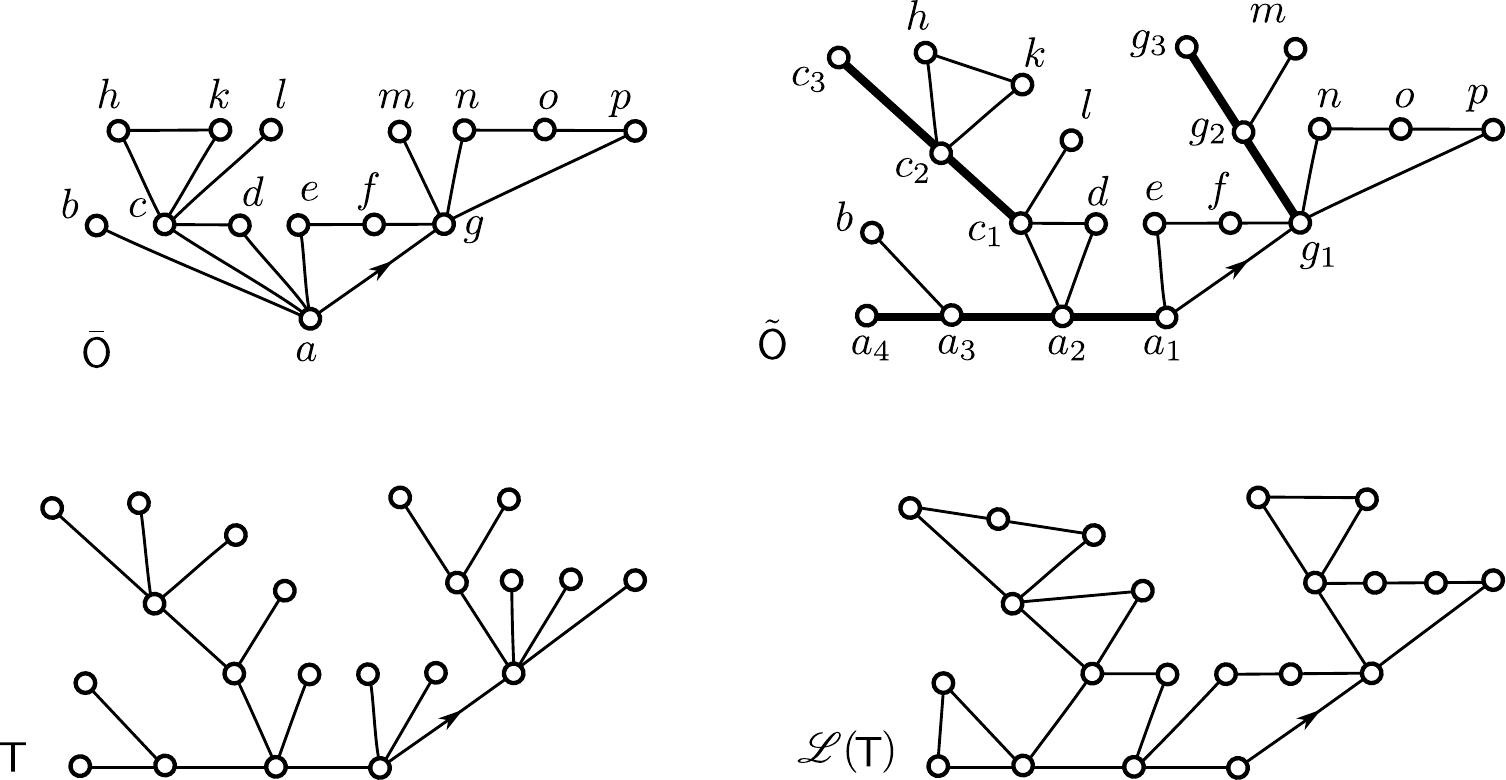}
		\caption{On the top left is an example of a chord-less outerplanar map $\bar{\mO}$. The outerplanar map $\tilde{\mO}$ on the top right is obtained from $\bar{\mO}$ by expanding each of the cutvertices in $\bar{\mO}$ into a line graph which is depicted with a thick line in $\tilde{\mO}$. A correspondence is built by letting each cutvertex correspond to the vertices of the thick line, e.g.~the vertex $a$ corresponds to the vertices $a_1,a_2,a_3,a_4$. The associated tree $\mT$ and discrete looptree $\mathscr{L}(\mT)$ are depicted on the bottom line.}
		\label{fi:gh}
	\end{minipage}
\end{figure}

\begin{proof}[Proof of Lemma~\ref{le:approx1}]
	By Lemma~\ref{le:outleaf}, we may sample the outerplanar map $\mO_n^\omega$ such that it corresponds to a simply generated tree with leaves as atoms $\tau_n^\cO$ that is decorated with random dissections of polygons. Let $\mathscr{L}(\tau_n^\cO)$ denote the discrete loop-tree corresponding to $\tau_n^\cO$.
	
	We claim that
	\begin {align} \label{eq:ghinequality}
	d_{\textsc{GH}} \left( (\bar{\mO}_n^\omega, d_{\bar{\mO}_n^\omega}), ( \mathscr{L}(\tau_n^\cO), d_{\mathscr{L}(\tau_n^\cO)}) \right) \le 2 \He(\tau_n^\cO)+1.
	\end{align}	
	To see this, we define an intermediate object which will clarify how to choose a good correspondence between the vertex sets of $\bar{\mO}_n$ and  $\mathscr{L}(\tau_n^\cO)$. 
	
	Let $\bar{\mathsf{O}}$ be an outerplanar map in which each dissection is simply a polygon without any chords. Let $\mathsf{T}$ be the corresponding tree with leaves as atoms as defined in Subsection \ref{ss:optrees}, and let $\mathscr{L}(\mathsf{T)}$ be the corresponding discrete looptree. We define an intermediate object $\tilde{\mathsf{O}}$ by expanding each cutvertex of $\bar{\mathsf{O}}$ into a line graph with a length which equals the number of blocks attached to the cutvertex. The corresponding blocks are then attached, one by one, to this line in the same order as they appear around the cutvertex. Each line will have one remaining endpoint of degree one which no block is attached to. See top of Fig.~\ref{fi:gh}
	for an illustration. 
	
	We define a correspondence between $\bar{\mathsf{O}}$ and $\tilde{\mathsf{O}}$ by letting each cutvertex in $\bar{\mathsf{O}}$ correspond to vertices on the associated line graph in $\tilde{\mathsf{O}}$ and other vertices have an obvious correspondence. The tree $T$ is obtained from $\tilde{\mO}$ by keeping the same vertex set and modifying the edges in a simple way as is evident from Fig.~\ref{fi:gh}. The discrete looptree $\mathscr{L}(\mT)$, which also shares the same vertex set as $\mT$, will then have a canonical correspondence with $\tilde{\mO}$.
	  It is now straightforward to see that the Gromov--Hausdorff distance between $\bar{\mO}$ and $\tilde{\mO}$ is at most $2\mH(\mT)$ and that the Gromov--Hausdorff distance between $\tilde{\mO}$ and $\mathscr{L}(\mT)$ is at most 1. Then \eqref{eq:ghinequality} follows from the triangle inequality.
	
	 By Equation~\eqref{eq:first} and the scaling limit~\cite[Thm. 4.1]{MR3286462}, it follows that
	\[
	(\mathscr{L}(\tau_n^\cO), b_n^{-1} d_{\mathscr{L}(\tau_n^\cO)}) \convdis (\mathscr{L}_\alpha, d_{\mathscr{L}_\alpha}).
	\]
	Consequently, by the inequality \eqref{eq:ghinequality} and by Equation~\eqref{eq:second} which tells us that $\He(\tau_n^\cO) = o_p(b_n)$ it follows that
	\[
	(\bar{\mO}_n^\omega, b_n^{-1} d_{\bar{\mO}_n^\omega}) \convdis (\mathscr{L}_\alpha, d_{\mathscr{L}_\alpha}).
	\]
\end{proof}

\section{Proofs in the circle regime}

\begin{proof}[Proof of Theorem~\ref{te:circ}]
By arguments identical to those in the proof of Lemma~\ref{le:asymptotics}, we see that the tree $\cT^\cD_n$ is distributed like a Galton--Watson tree conditioned on having $n$ vertices, with offspring distribution $\xi(\cT^\cD)$ satisfying
\begin{align*}
	\Pr{\xi(\cT^\cD) = k} \sim \frac{1}{1 - s(r)} L_k k^{-\alpha -1},
\end{align*}
and that the tree $\tau_n^\cO$ is distributed like a Galton--Watson tree conditioned on having $n$ leaves, with branching distribution $\xi$ satisfying
	\begin{align*}
\Pr{\xi = k} &\sim \frac{L_k(1-r)^{\alpha+1}}{(1-s(r))r^\alpha} k^{-\alpha -1}.
\end{align*}	
As
\[
\Pr{\xi = k} = \rho_\cD^{k-1} [z^{k-1}]\cD^\gamma(z)
\]
and $\Pr{\xi = k} \sim \Pr{\xi = k-1}$,
this implies
that
\[
[z^k]\cD^\gamma(z) \sim \rho_\cD^{-k} \Pr{\xi =k}.
\]
Note that $\nu_\cD <1$ and Lemma~\ref{le:reg2} imply $\cD^\gamma(\rho_\cD) = \tau_\cD = r < 1$. This allows us to apply \cite[Thm. 4.8, 4.30]{MR3097424} to deduce that
\begin{align*}
[z^k]\phi_\cO(z) &\sim \Seq'(\cD^\gamma(\rho_\cD)) [z^k]\cD^\gamma(z) \notag \\
 &= \frac{1}{(1 - r)^2} \frac{L_k(1-r)^{\alpha+1}}{(1-s(r))r^\alpha} k^{-\alpha -1} \rho_\cD^{-k}.
\end{align*}
Here it holds that $\rho_\cD = \tau_\cD / \phi_\cD(\tau_\cD) = r(1-s(r))$. 

As $\nu_\cO<1$, it follows that $\tau_\cO = \rho_\cD = r(1-s(r))$.  The tree $\cT_n^\cO$ is distributed like a Galton--Watson tree conditioned on having $n$ vertices, with the offspring distribution $\xi(\cT^\cO)$ satisfying
\begin{align}
	\label{eq:mmref}
	\Pr{\xi(\cT^\cO) = k} &= \tau_\cO^k([z^k] \phi_\cO(z)) / \phi_\cO(\tau_\cO) \notag \\
						&\sim  \frac{L_k(1-r)^{\alpha}}{(1-s(r))r^\alpha} k^{-\alpha -1} .
\end{align}
Note that $\Ex{\xi^\cO} = \nu_\cO < 1$.

Having this asymptotic expansion at hand, we may argue similarly as in Lemma~\ref{le:diss} to show that $\mO_n^\omega$ (equipped with the uniform measure on its vertices) may be approximated in the Gromov--Hausdorff--Prokhorov sense by a large dissection. The convergence of this dissection towards a circle equipped with a uniform point then follows analogously as in Remark~\ref{eq:rem}, yielding convergence for $\mO_n^\omega$.

Let us make this explicit. Recall that the outerplanar map $\mO_n^\gamma$ corresponds via a combinatorial bijection to the enriched tree $(\cT_n^\cO, \beta_n^\cO)$. The family $\beta_n^\cO$ assigns to each vertex $v \in \cT_n^\cO$ an ordered sequence of dissections $\beta_n^\cO(v)$. Similarly as in the proof of Lemma~\ref{le:diss}, we let $u^* \in \cT_n^\cO$ denote the lexicographically first vertex with maximal outdegree. We let $A_n$ denote the tree obtained by cutting away all descendants of the vertex $u^*$ and let $(T_n^i)_{1 \le i \le d^+_{\cT_n^\cO}(u^*)}$ denote the ordered family of fringe subtrees dangling from $u^*$. 

It follows from~\cite[Thm. 20.1]{MR2908619} that the pruned tree $A_n$ converges in the local weak sense towards an almost surely finite tree. Hence its size is stochastically bounded. As for the fringe subtrees, Kortchemski's results~\cite[Thm. 1, Thm. 3]{MR3335012} imply that there is a slowly varying sequence $(\tilde{L}_n)_{n \ge 1}$ such that the sequence $C_n := \tilde{L}_n n^{1/\alpha}$ satisfies
\begin{align}
\label{eq:xx1}
d^+_{\cT_n^\cO}(u^*) = (1 - \nu_\cO)n + O_p(C_n) 
\end{align}
and such that the process $Z_k := |T_n^1| + \ldots + |T_n^k|$ satisfies
\begin{align}
\label{eq:xyo}
\max_{1 \le k \le d^+_{\cT_n^\cO}(u^*)} | Z_k - k /(1 - \nu_\cO)| = O_p(C_n).
\end{align}
Moreover, by a general Gibbs partition result~\cite[Thm. 3.1]{Mreplaceme} it follows that the largest dissection in the ordered  sequence $\beta_n^\cO(u^*)$ of dissections has size $d^+_{\cT_n^\cO}(u^*) + O_p(1)$.
Thus, the outerplanar map $\mO_n^\gamma$ consists of a giant dissection $D(\mO_n^\omega)$ of size 
\begin{align}
\label{eq:o1}
|D(\mO_n^\omega)| = (1 - \nu_\cO)n + O_p(C_n)
\end{align}
 such that each vertex $v_k$ of its counter-clockwise ordered vertices $v_1, \ldots, v_{|D(\mO_n^\omega|}$  is identified with the root-vertex of some outerplanar map $O_k$ with maximal size 
 \begin{align} 
 \label{eq:o2}
 \max_{1 \le i \le |D(\mO_n^\omega)|} |O_i| = O_p(C_n).
 \end{align} One of the $O_k$ corresponds to the union of $A_n$ and a stochastically bounded number of fringe subtrees dangling from of $u^*$, and each of the other $O_k$ corresponds to one of the remaining fringe subtrees dangling from ~$u^*$. It follows from~\eqref{eq:xyo} that
\begin{align}
\label{eq:o3}
\max_{1 \le k \le |D(\mO_n^\omega|} \left| |O_1| + \ldots |O_k| - k /(1 - \nu_\cO) \right| = O_p( C_n).
\end{align}
Let us say that each vertex $v \in \mO_n^\omega$ corresponds to the unique vertex $v_k \in D(\mO_n^\omega)$ with $v \in O_k$. It follows from \eqref{eq:o2} that the distortion of this correspondence has order $O_p(C_n)$. That is, the Hausdorff distance between the space $(\mO_n^\omega, d_{\mO_n^\omega})$ and the subspace $(D(\mO_n^\omega), d_{D(\mO_n^\omega)})$ lies in $O_p(C_n)$. Let $x \in \mO_n^\omega$ be drawn uniformly at random and let $v_{k(x)} \in \mD_n^\omega$ be its corresponding vertex on the dissection. It follows from \eqref{eq:o2} that the Prokhorov-distance between the distribution of $x$ and $v_{k(x)}$ lies in $O_p(C_n)$.  Hence, the distributions $\cL(x)$ and $\cL(v_{k(x)})$ of the random points of $x$ and $v_{k(x)}$ satisfy
\begin{align}
	\label{eq:ss1}
	d_{\mathrm{GHP}}\left( \left(\mO_n^\omega, \frac{1}{n(1- \nu_\cD)(1 - \nu_\cO)} d_{\mO_n^\omega}, \cL(x)\right) ,  \left(D(\mO_n^\omega), \frac{1}{n(1- \nu_\cD)}d_{D(\mO_n^\omega)}, \cL(v_{k(x)})\right) \right) \convdis 0.
\end{align}
For any $k\ge 1$ it holds that
\[
 	\Pr{ k(x) \le k} = \Exb{ |O_1| + \ldots + |O_k|} / n.
\]
By dominated convergence and Equations~\eqref{eq:o1} and \eqref{eq:o3} it follows that $\frac{k(x)}{n(1 - \nu_\cO)}$ converges weakly towards a uniform point of the unit interval $[0,1]$. Consequently, if $\mu_n'$ denotes the uniform measure on the vertices $D(\mO_n^\omega)$, it follows that
\begin{align}
\label{eq:ss2}
d_{\mathrm{GHP}}\left(  \left(D(\mO_n^\omega), \frac{1}{n(1- \nu_\cD)}d_{D(\mO_n^\omega)}, \cL(v_{k(x)})\right),   \left(D(\mO_n^\omega), \frac{1}{n(1- \nu_\cD)}d_{D(\mO_n^\omega)}, \mu_n')\right) \right) \convdis 0.
\end{align}
Let $\mu_n$ denote the uniform distribution on the vertices of $\mD_n^\gamma$, and $\mu$ the uniform distribution on the circle $C^1$. By identical arguments as for Remark~\ref{eq:rem} it follows that
\[
\left(\mD_n^\gamma, \frac{1}{ n(1-\nu_\cD)} d_{\mD_n^\gamma}, \mu_n \right) \convdis (C^1, d_{C^1}, \mu)
\]
in the Gromov--Hausdorff--Prokhorov sense. As $( D(\mO_n^\omega) \mid |D(\mO_n^\omega)| = k) \eqdist D_k(\mO_n^\omega)$ for any $k$, it follows from Equation \eqref{eq:o1} that 
\[
\left(D(\mO_n^\omega), \frac{1}{ n(1-\nu_\cD)} d_{D(\mO_n^\omega)}, \mu_n' \right) \convdis (C^1, d_{C^1}, \mu).
\]
Together with the limits \eqref{eq:ss1} and \eqref{eq:ss2} this implies that 
\[
\left(\mO_n^\omega, \frac{1}{n(1- \nu_\cD)(1 - \nu_\cO)} d_{\mO_n^\omega}, \cL(x)\right) \convdis (C^1, d_{C^1}, \mu).
\]
\end{proof}

\bibliographystyle{siam}
\bibliography{outerp}

\end{document}